\date{}
\newtheorem{theorem}{Theorem}[section]
\newtheorem{lemma}[theorem]{Lemma}
\newtheorem{corollary}[theorem]{Corollary}
\newtheorem{remark}[theorem]{Remark}
\newtheorem{example}[theorem]{Example}
\numberwithin{equation}{section}
\begin{document}

\vspace*{0.2cm}

\centerline{{\Large\bf An analytic approach to  infinite-dimensional}}

\vskip .1in

\centerline{{\Large\bf continuity and Fokker--Planck--Kolmogorov equations}}

\vspace*{0.1in}

\centerline{
 {\sc Vladimir I.~Bogachev, Giuseppe Da Prato,
Michael R\"ockner,}}

\centerline{{\sc and Stanislav V.~Shaposhnikov}}

\vskip 0.3in

{\small
\noindent
{\bf Abstract.}
 We prove a new uniqueness result for solutions to
Fokker--Planck--Kolmogorov  {\rm(}FPK{\rm)} equations for probability measures on
infinite-dimen\-si\-o\-nal spaces.
We consider infinite-dimensional drifts that admit certain
finite-dimensional approximations.
In contrast to most of the previous work on FPK-equations in infinite dimensions,
we include cases with non-constant coefficients in the second order
part and also include degenerate cases where these coefficients can even be
zero.
Also a new existence result is proved.
Some applications  to
Fokker--Planck--Kolmogorov  equations associated with SPDEs
are presented.
}

\vskip 0.2in



\noindent
{\bf Mathematics Subject Classification} \ 60H15, 60J35, 69J60, 47D07

\noindent
Running head: An analytic approach

\section*{Introduction}

In this paper we study the Cauchy problem
for infinite-dimensional Fokker--Planck--Kolmogorov equations of the form
$\partial_t \mu=L^{*}\mu$ for bounded
Borel measures $\mu$ on the space $\mathbb{R}^\infty\times (0, T_0)$,
where $\mathbb{R}^\infty$ is the countable power of $\mathbb{R}$ with the product topology, and
second order operators
$$
L\varphi=\sum_{i,j} a^{ij}\partial_{x_i}\partial_{x_j}\varphi+
\sum_{i} B^i \partial_{x_i}\varphi
$$
 defined on smooth functions of finitely many variables.
Such equations arise in many applications and
have been intensively studied in the last decades.
In particular, they are satisfied by transition probabilities of infinite-dimensional diffusions,
which is an important motivation for this paper. The finite-dimensional case has been studied
in depth by many authors (see the recent surveys \cite{BKR09} and \cite{BRS11b}),
in particular,
there is an extensive literature on regularity and uniqueness of solutions to
Fokker--Planck--Kolmogorov equations for measures on finite-dimensional spaces,
see \cite{BDR08},
\cite{BDPRS07}, \cite{BKR}, \cite{BKR09}, \cite{BRS11b}, \cite{Fig},
\cite{LBL}, \cite{SHDAN11},  and
the references there.
The infinite-dimensional case is considerably less studied, although there is also
a vast literature devoted to this case (see, e.g.,
\cite{BDPR09}, \cite{BDPR10}, \cite{BDPR11}, \cite{DD}, \cite{GN}, \cite{RS},
 and the references there).

The organization of the paper is as follows.
In Section~1 we introduce a general class
of Fokker--Planck--Kolmogorov equations
in infinite dimensions and prove some preliminary
results. In Section~2 we prove uniqueness of
probability solutions for these equations
under a certain approximative condition (which is
a condition on all components of the drift term
in a certain uniform way), which considerably generalizes our previous
uniqueness results in \cite{BDPR10} and \cite{BDPR11}.
The main difference with the finite-dimensional case is that in the latter
the global integrability of the coefficients $a^{ij}$ and $b^i$ with respect to the solution
ensures its uniqueness, but there is no infinite-dimensional analog of this simple
sufficient condition. What we prove is only a partial analog (Example~\ref{ex2.1}(ii)
formally gives a full analog, but the condition on the norm of the whole
drift is very restrictive in infinite dimensions).
More precisely, we establish two uniqueness results: Theorem~\ref{th1}
(nondegenerate diffusion matrices) and Theorem~\ref{th2} that applies also to
degenerate equations, in particular to fully degenerate transport (or continuity)
equations including the continuity equation associated to $2d$-Navier--Stokes equation.

In Section~3 we address the question of existence
of solutions  to our general FPK-equations and
prove Theorem~\ref{t4.1} which implies existence
under quite broad assumptions,
in particular, for stochastic Navier--Stokes equations over domains
in $\mathbb{R}^d$ for all dimensions~$d$.
In Section~2 and Section~3 we also consider examples that include
two other types of SPDEs, namely, stochastic
reaction diffusion equations on a bounded domain in $\mathbb{R}^d$
(Example~\ref{ex2.9})
and Burgers equation (Example~\ref{ex2.10})
on the interval $(0,1)$; their mixture is considered in Example~\ref{ex2.11}.

The approach and assumptions in this work differ from those in our earlier paper \cite{BDPR09},
where probabilistic tools were employed. Here we develop a purely analytic approach
without stochastic analysis and (for the first time in infinite dimension) also
include the case of nonconstant diffusion matrices.
The techniques are also different from the ones
in \cite{BDPR09}, \cite{BDPR10}, and \cite{DFPR}, where measures on Hilbert spaces were considered,
but the essential difference is not the type of infinite-dimensional spaces,
 but rather the method of proof which could be called
{\it approximative Holmgren method}, the idea of which is to multiply
the original equation by a solution of a certain equation approximating
the adjoint equation (but not the exact adjoint equation as in Holmgren's
method) and obtain after integration certain estimates (which replace
exact equalities in the classical Holmgren method).

This work was supported by the RFBR projects 13-01-00332, 
11-01-90421-Ukr-f-a, 11-01-12104-ofi-m, NS-8265.2010.1, 12-01-33009, the Russian
President Grant MD-754.2013.1,
and by the DFG through the program SFB 701 at the University of Bielefeld.

\section{Framework and preliminaries}

Let us describe our framework.
Let $B=(B^i(x, t))$ be a sequence of Borel
functions on
$\mathbb{R}^{\infty}\times(0, T_0)$,
where $T_0>0$ is fixed, and let
$a^{ij}$ be Borel functions on $\mathbb{R}^{\infty}\times(0,T_0)$.
Let us consider the Cauchy problem
\begin{equation}\label{e1}
\left\{\begin{array}{l}
\partial_t\mu=L^{*}\mu, \\
\mu|_{t=0}=\nu,
\end{array}\right.
\end{equation}
where $L^{*}$ is the formal adjoint operator for a differential operator $L$ defined by
$$
L\varphi(x, t)=\sum_{i, j=1}^{\infty}a^{ij}(x, t)\partial_{e_i}\partial_{e_j}
\varphi(x, t)
+\sum_{i=1}^{\infty}B^i(x, t)\partial_{e_i}\varphi(x, t)
$$
for every smooth function $\varphi$
depending on finitely many coordinates of $x$,
$\partial_{e_i}\varphi$ denotes the partial derivative along the vector
$e_i=(0,\ldots,0,1,0,\ldots)$.
Equations of this form are usually called
Fokker--Planck--Kolmogorov equations.

Throughout this paper a measure means a bounded signed measure (not necessarily nonnegative,
although our principal results will be concerned with probability measures).
The total variation of a measure $\mu$ is denoted by $|\mu|$.
Let $J$ be an interval in $[0, +\infty)$.
We use the standard notation $C(\mathbb{R}^k\times J)$ and
$C^{2,1}(\mathbb{R}^k\times J)$ for the class of real  continuous
functions on $\mathbb{R}^k\times J$ and its subclass
consisting of all functions $f$ having  continuous partial derivatives
$\partial_t f$, $\partial_{x_i}f$ and $\partial^2_{x_i x_j}f$.
Let $C_b(\mathbb{R}^k\times J)$ and
$C_b^{2,1}(\mathbb{R}^k\times J)$ denote the subclasses in these classes
consisting of bounded functions and functions $f$ with bounded derivatives
$\partial_t f$,
$\partial_{x_i}f$ and $\partial^2_{x_i x_j}f$,
respectively,
and $C_0^{2,1}(\mathbb{R}^k\times J)$
is the subspace in $C_b^{2,1}(\mathbb{R}^k\times J)$
consisting of functions with compact support
in $\mathbb{R}^k\times J$.

The inner product in $\mathbb{R}^d$
will be denoted by $\langle\,\cdot\,,\,\cdot\,\rangle$;
in the case of $L^2$-spaces we write $\langle\,\cdot\,,\,\cdot\,\rangle_2$
for its inner product
and the corresponding norm is denoted by $\|\,\cdot\,\|_2$.
The $L^p$-norm will be denoted by $\|\,\cdot\,\|_p$.
The norm $\|\,\cdot\,\|_{p,k}$
in the Sobolev space $H^{p,k}(U)$ of all functions
on a domain $U$ belonging to $L^p(U)$ along with
their generalized partial derivatives up to order $k$ is defined as
the sum of the $L^p$-norms of all partial derivatives up to order~$k$
(including $k=0$).

Let $P_N\colon\, \mathbb{R}^\infty\to\mathbb{R}^N$,
$P_Nx=(x_1,\ldots,x_N)$.
Given a function $\varphi$ on $\mathbb{R}^k$ we denote by the same
symbol the function on $\mathbb{R}^\infty$ defined by
$\varphi (x):=\varphi(P_kx)$.

We shall say that a bounded Borel  measure $\mu=\mu_t(dx)\,dt$ on
$\mathbb{R}^{\infty}\times(0, T_0)$,
where $(\mu_t)_{0<t<T_0}$  is a family of bounded Borel measures on $\mathbb{R}^{\infty}$,
satisfies the equation
$$
\partial_t\mu=L^{*}\mu
$$
if the functions  $a^{ij}$, $B^{i}$ are integrable with respect
to the variation $|\mu|$ of $\mu$
and for every $k\ge 1$
and every function $\varphi\in C^{2,1}_0(\mathbb{R}^k\times(0, T_0))$ we have
$$
\int_0^{T_0}\int_{\mathbb{R}^{\infty}}
\Bigl[\partial_t \varphi +
\sum_{i, j=1}^{\infty}a^{ij}\partial_{x_i}\partial_{x_j}\varphi
+\sum_{i=1}^{\infty}B^i\partial_{x_i}\varphi \Bigr] \,d\mu_t\,dt=0.
$$
It is obvious that it is enough to have this identity for all
$\varphi\in C^{\infty}_0(\mathbb{R}^k\times(0, T_0))$.

Let $\nu$ be a bounded Borel measure on $\mathbb{R}^{\infty}$. We say that the measure $\mu$ satisfies
the initial condition $\mu|_{t=0}=\nu$
if for every $k\ge 1$ and $\zeta\in C^{2}_0(\mathbb{R}^k)$ we have
$$
\lim_{t\to 0}\int_{\mathbb{R}^{\infty}}\zeta(x)\,\mu_t(dx)
=\int_{\mathbb{R}^d}\zeta(x)\,\nu(dx).
$$
Clearly, if $\sup_t \|\mu_t\|<\infty$, it suffices to have this equality
for all $\zeta\in C^{\infty}_0(\mathbb{R}^k)$.

We need the following auxiliary lemma.

\begin{lemma}\label{lem1.1}
Let $\mu=\mu_t(dx)\,dt$ be a solution to {\rm (\ref{e1})} such that $\sup_{t\in(0, 1)}\|\mu_t\|<\infty$.
Assume that $B^k\in L^1(|\mu|)$ for every $k\in\mathbb{N}$ and let
$0<T<T_0$. Then for every number $k\ge 1$
and every function $\varphi\in C_b(\mathbb{R}^k\times[0, T])
\bigcap C^{2,1}_b(\mathbb{R}^k\times (0, T))$
the  equality
\begin{equation}\label{e2}
\int_{\mathbb{R}^{\infty}}\varphi(x, t)\,\mu_t(dx)
=\int_{\mathbb{R}^{\infty}}\varphi(x, 0)\,\nu(dx)+
\int_0^t\int_{\mathbb{R}^{\infty}}
 [\partial_s\varphi+L\varphi]\,d\mu_s\,ds
\end{equation}
holds for almost every $t\in[0, T]$. Conversely, {\rm(\ref{e2})}
 implies {\rm(\ref{e1})}.
\end{lemma}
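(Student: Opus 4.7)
The plan is to obtain (\ref{e2}) from (\ref{e1}) by substituting a carefully designed family of compactly supported test functions into the defining identity of (\ref{e1}) and passing to limits in two parameters: a spatial cutoff scale $R$ and a time-smoothing scale $\epsilon$. The converse (\ref{e2}) $\Rightarrow$ (\ref{e1}) is essentially immediate: any $\varphi \in C_0^{2,1}(\mathbb{R}^k\times(0,T_0))$ satisfies $\varphi(\cdot,0)\equiv 0$ and, for $T<T_0$ chosen so that $\mathrm{supp}\,\varphi \subset \mathbb{R}^k\times(0,T)$, also $\varphi(\cdot,T)\equiv 0$; applying (\ref{e2}) at $t=T$ makes both boundary terms vanish and reduces to the defining identity of (\ref{e1}).

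For the forward direction, fix $t\in(0,T)$ and pick a smooth spatial cutoff $\chi_R\in C_0^\infty(\mathbb{R}^k)$ with $0\le \chi_R\le 1$, $\chi_R\equiv 1$ on $\{|y|\le R\}$, and $|\nabla\chi_R|+|\nabla^2\chi_R|$ bounded uniformly in $R$; and a time cutoff $\eta_\epsilon\in C_0^\infty((0,T_0))$ of the form $\eta_\epsilon(s)=H_\epsilon(s)-H_\epsilon(s-t)$, where $H_\epsilon$ is a smoothed Heaviside supported in $[0,\epsilon]$. Then $\psi_{R,\epsilon}(x,s):=\varphi(x,s)\chi_R(x)\eta_\epsilon(s)\in C_0^{2,1}(\mathbb{R}^k\times(0,T_0))$, and substituting $\psi_{R,\epsilon}$ into the definition of (\ref{e1}) yields, after expansion,
$$\int_0^{T_0}\eta'_\epsilon(s)F_R(s)\,ds+\int_0^{T_0}\eta_\epsilon(s)G_R(s)\,ds=0,$$
with $F_R(s)=\int \varphi(x,s)\chi_R(x)\,\mu_s(dx)$ and $G_R(s)=\int[\partial_s\varphi\cdot\chi_R+L(\varphi\chi_R)]\,d\mu_s$. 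Sending $\epsilon\to 0$, the $\eta_\epsilon$-term tends to $\int_0^tG_R(s)\,ds$ by dominated convergence, while the $\eta'_\epsilon$-term splits into a contribution near $s=0$ converging to $F_R(0^+)=\int\varphi(x,0)\chi_R(x)\,\nu(dx)$, and a contribution near $s=t$ converging to $F_R(t)$ for a.e.\ $t\in(0,T)$ by the Lebesgue differentiation theorem applied to the bounded measurable function $F_R$.

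Next, I would send $R\to\infty$ along a countable sequence $R_n\uparrow\infty$ so that the exceptional $t$-set stays independent of $n$. The boundary terms converge to $\int\varphi(x,t)\,\mu_t(dx)$ and $\int\varphi(x,0)\,\nu(dx)$ by dominated convergence. In the integrand $L(\varphi\chi_R)=\chi_R L\varphi+\varphi L\chi_R+2\sum_{i,j}a^{ij}\partial_{x_i}\varphi\,\partial_{x_j}\chi_R$, the first piece converges to $L\varphi$ by dominated convergence using $a^{ij},B^i\in L^1(|\mu|)$ and boundedness of the derivatives of $\varphi$, while the cross terms $\varphi L\chi_R$ and $2\sum a^{ij}\partial_{x_i}\varphi\,\partial_{x_j}\chi_R$ are supported on the shell $R\le|P_kx|\le R+1$ and vanish in the limit by the same integrability together with $\{|P_kx|\ge R\}\downarrow\emptyset$.

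The most delicate step I expect is the $\epsilon\to 0$ limit of the $\eta'_\epsilon$-term near $s=0$: the initial condition in the paper is stated only for $\zeta\in C_0^2(\mathbb{R}^k)$, so one must extend it to general bounded continuous $\zeta$ via a density argument based on $\sup_s\|\mu_s\|<\infty$, and combine this with the continuity of $\varphi(\cdot,s)$ in $s$ up to $s=0$ on the compact set $\mathrm{supp}\,\chi_R$ to justify $F_R(s)\to F_R(0)$ as $s\to 0^+$.
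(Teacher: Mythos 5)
Your proof is correct and follows essentially the same route as the paper's: both test the weak formulation against $\varphi$ multiplied by a time cutoff (the paper uses arbitrary $\eta\in C_0^{\infty}((0,T))$ to produce an absolutely continuous version of $t\mapsto\int\varphi\,d\mu_t$ and identifies the constant via the initial condition as $t\to 0$, while you mollify the indicator of $[0,t]$ and use Lebesgue points), and both remove the spatial cutoff via the $|\mu|$-integrability of $a^{ij}$, $B^i$ --- a reduction the paper states without detail and you spell out, including the extension of the initial condition to merely continuous compactly supported test functions. The only cosmetic fix needed is to make $\eta_\epsilon$ vanish on a neighborhood of $s=0$ (e.g.\ let $H_\epsilon$ rise on $[\epsilon/2,\epsilon]$) so that $\psi_{R,\epsilon}$ is genuinely compactly supported in $\mathbb{R}^k\times(0,T_0)$, and in the converse to pick an admissible $t$ beyond the time support of $\varphi$ at which (\ref{e2}) holds.
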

\begin{proof}
It is enough to prove this equality in the case where $\varphi(z, t)=0$
if $|z|>R>0$ for almost every $t\in[0, T]$.
Let $\eta\in C^{\infty}_0((0, T))$. According to our definition we have
$$
\int_0^{T}\int_{\mathbb{R}^{\infty}}
[\partial_t(\varphi\eta)+L(\varphi\eta)]\,d\mu_t\,dt=0.
$$
Thus, we obtain
$$
-\int_{0}^T
\eta'(t)\int_{\mathbb{R}^{\infty}}\varphi(x, t)\, \mu_t(dx)\,dt=
\int_{0}^T\eta(t)\int_{\mathbb{R}^{\infty}}
[\partial_t\varphi+L\varphi]\,d\mu_t\,dt.
$$
Hence the function
$$
t\mapsto\int_{\mathbb{R}^{\infty}}\varphi(x, t)\, \mu_t(dx)
$$
on $(0,T)$ has an absolutely continuous version for which
$$
\frac{d}{dt}\int_{\mathbb{R}^{\infty}}\varphi(x, t)\, \mu_t(dx)
=\int_{\mathbb{R}^{\infty}}
[\partial_t\varphi+L\varphi] \,d\mu_t.
$$
Therefore, for some constant $C\in\mathbb{R}$ the equality
$$
\int_{\mathbb{R}^{\infty}}\varphi(x, t)\, \mu_t(dx)
=C+\int_0^t\int_{\mathbb{R}^{\infty}}
[\partial_s\varphi+L\varphi]\,\mu_s\,ds
$$
holds for almost every $t\in[0, T]$.
Note that $\varphi(x, t)$  converges uniformly to $\varphi(x, 0)$ as $t\to 0$. Moreover, we have
$$
\lim_{t\to 0}\int_{\mathbb{R}^{\infty}}\varphi(x, 0)\,\mu_t(\,dx)=
\int_{\mathbb{R}^{\infty}}\varphi(x, 0)\,\nu(dx).
$$
It follows that
$$
C=\int_{\mathbb{R}^{\infty}}\varphi(x, 0)\, \nu(dx),
$$
which completes the proof of one implication. The converse is, however, obvious.
\end{proof}

\begin{remark}
\rm
Let $k\in\mathbb{N}$.
If $\varphi(\, \cdot \,, t)=\psi\in C^2_b(\mathbb{R}^k)$ for every $t\in[0, T]$, $T<T_0$,
then by (\ref{e2}) we have
\begin{equation}\label{e3}
\int_{\mathbb{R}^{\infty}}\psi(x)\, \mu_t(dx)
=\int_{\mathbb{R}^{\infty}}\psi(x)\, \nu(dx)+
\int_0^t\int_{\mathbb{R}^{\infty}}L\psi(x, s)\, \mu_s(dx)\,ds
\end{equation}
for almost all $t\in [0, T]$.
Moreover, if $J^{\mu}_{\psi}$ denotes the set of all $t\in[0, T]$ such that equality (\ref{e3}) holds,
then the closure of $J^{\mu}_{\psi}$ coincides with $[0, T]$ and the restriction
of the mapping
$$
t\mapsto\int_{\mathbb{R}^{\infty}}\psi(x)\,\mu_t(dx)
$$
 to $J^{\mu}_{\psi}$ is continuous, since the right-hand side of (\ref{e3}) is continuous in~$t$.
\end{remark}

\begin{remark}\label{rem1.3}
\rm
Let $\varphi$ be as in Lemma 1.1. and assume that
$T\in J^{\mu}_{\varphi(\, \cdot \,, T)}$. Then equality (\ref{e2}) holds with $t=T$. Indeed,
$\varphi(x, t)$  converges uniformly to $\varphi(x, T)$ as $t\to T$.
Let $I$ be the set of all $t\in[0, T]$ such that equality (\ref{e2}) holds.
Let us take a sequence ${t_n}\in J^{\mu}_{\varphi(\,\cdot\,, T)}\bigcap I$ such that
$\lim\limits_{n\to\infty}t_n=T$. Then we have
$$
\lim_{n\to\infty}\int_{\mathbb{R}^{\infty}}
\varphi(x, t_n)\, \mu_{t_n}(dx)
=\int_{\mathbb{R}^{\infty}}\varphi(x, T)\, \mu_T(dx)
$$
and equality (\ref{e2}) holds for each $t_n$. Letting $n\to\infty$, we obtain equality (\ref{e2}) with $t=T$.
\end{remark}

\section{Uniqueness of probability solutions}

In this section two establish two different uniqueness results: first we consider
nondegenerate diffusion matrices and then turn to the general case that includes
fully degenerate equations. We start with stating our assumptions about $A$ and $b$.

\, (A) \,
$a^{ij}=a^{ji}$, each function $a^{ij}$
 depends only on the variables
$t, x_1, x_2, \ldots, x_{\max\{i, j\}}$ and is continuous and for every natural number $N$
the matrix $A_N=(a^{ij})_{1\le i, j\le N}$
satisfies the following condition:

there exist positive numbers $\gamma_N$, $\lambda_N$ and $\beta_N\in (0, 1]$ such that
for all $x, y\in\mathbb{R}^N$ and $t\in [0, T_0]$ one has
$$
\gamma_N|y|^2\le \langle A_N(x, t)y, y\rangle \le\gamma_N^{-1}|y|^2, \quad
\|A_N(x, t)-A_N(y, t)\|\le \lambda_N|x-y|^{\beta_N},
$$
where $\|\,\cdot\,\|$ is  the operator norm and $|\,\cdot\,|$ is the standard Euclidean norm.

Let $\nu$ be a Borel probability measure on $\mathbb{R}^{\infty}$ and
let $\mathcal{P}_{\nu}$ be some {\it convex} set of probability solutions $\mu=\mu_t(dx)\,dt$ to (\ref{e1}),
i.e., $\mu_t\ge 0$ and $\mu_t(\mathbb{R}^{\infty})=1$ for every
$t\in (0, T_0)$,
such that $|B^k|\in L^2(\mu)$ for each $k\in\mathbb{N}$ and the following condition holds:

\, (B) \, for every $\varepsilon>0$ and every natural number $d$ there exist a natural number $N\ge d$
and a $C^{2,1}_b$-mapping $(b^k)_{k=1}^N\colon\,
\mathbb{R}^N\times [0, T_0]\to \mathbb{R}^N$ such that
$$
\int_0^{T_0}\int_{\mathbb{R}^\infty}
|A_N(x,t)^{-1/2}(B_N(x, t)-b(x_1, \ldots, x_N, t))|^2\,
\mu_t(dx)\,dt<\varepsilon,
$$
where $B_N=(B^1, \ldots, B^N)$.

Let us illustrate condition (B) by several examples.

\begin{example}\label{ex2.1}
{\rm
\, (i) \, Let $B^k$ depend only on the variables $t, x_1, x_2, \ldots, x_k$.
Then in order to ensure our condition (B) we need
only the inclusion $|B^k|\in L^2(\mu)$
for all $k\ge 1$.
Indeed, we set $N=d$ and approximate each function $B^k$ separately.

\, (ii) \,
Let $\alpha_k$ be a positive number for each $k\in\mathbb{N}$ and
$$
l_{1/\alpha}^2=\Bigl\{(z_k)\colon \, \sum_{k=1}^{\infty}\alpha_k^{-1}z_k^2<\infty\Bigr\},
\quad
\|x\|_{1/\alpha}=\Bigl(\sum_{k=1}^{\infty}\alpha_k^{-1}z_k^2\Bigr)^{1/2}.
$$
Suppose that $a^{ij}$ satisfy condition (A) and there exists a positive number $C$ independent of $N$
such that
$$
|A_N(x, t)^{-1/2}y|\le C\|y\|_{l_{1/\alpha}}
$$
for all $x$, $t$ and $y=(y_1, y_2, \ldots, y_N, 0, 0, \ldots)$.
For example, this is true if $a^{ij}=0$ for $i\neq j$ and $a^{ii}=\alpha_i$.

Let $(B^k(x, t))\in l_{1/\alpha}^2$ for $\mu$-almost every $(x, t)$ and let $\|B\|_{1/\alpha}\in L^2(\mu)$.
For every $\varepsilon>0$ and every natural number $d$ we pick  a number $M>d$ such that
$$
\sum_{k=M+1}^{\infty}\int_0^{T_0}
\int_{\mathbb{R}^{\infty}}\alpha_k^{-1}|B^k|^2\,d\mu_t\,dt<\varepsilon/2.
$$
Then for every $B^k$ we find a  smooth function $b^k$ depending on the first $n_k$ variables such that
$$
\int_0^{T_0}\int_{\mathbb{R}^{\infty}}
\alpha_k^{-1}|B^k-b^k|^2\,d\mu_t\,dt< \varepsilon(2M)^{-1},
\quad k=1,\ldots,M.
$$
Set $N=\max\{M, n_1, n_2, \ldots, n_{M}\}$ and $b^k\equiv 0$ for $k>M$. Then
\begin{multline*}
\sum_{k=1}^{N}\int_0^{T_0}\int_{\mathbb{R}^{\infty}}\alpha_k^{-1}|B^k-b^k|^2\,d\mu_t\,dt
\\
=\sum_{k=1}^{M}\int_0^{T_0}\int_{\mathbb{R}^{\infty}}\alpha_k^{-1}|B^k-b^k|^2\,d\mu_t\,dt+
\sum_{k=M+1}^{N}\int_0^{T_0}\int_{\mathbb{R}^{\infty}}\alpha_k^{-1}|B^k|^2\,d\mu_t\,dt<\varepsilon.
\end{multline*}

\, (iii) \, Finally, for $a^{ij}$ as in (ii),
we can combine both examples. Let $B=G+F$,
where $G^k, F^k\in L^2(\mu)$,
$G^k(x,t)=G^k(x_1, x_2, \ldots, x_k, t)$,
$F(x, t)\in l^2_{1/\alpha}$
and $\|F\|_{1/\alpha}\in L^2(\mu)$.
Obviously, for given $B^k$ of this type the set of all
probability solutions $\mu=\mu_t(dx)dt$
to (\ref{e1}) satisfying the previous
integrability conditions is convex.
}\end{example}

\begin{remark}\rm
(i) Obviously, condition (B) is equivalent to the following: there exist an increasing
sequence $N_l\to +\infty$ and $C_b^{2,1}$-mappings $b^l=(b^{l,k})_{k=1}^{N_l}$
on $\mathbb{R}^{N_l}\times [0,T_0]$ such that
$$
\lim\limits_{l\to\infty}
\int_0^{T_0}\int_{\mathbb{R}^\infty}
|A_{N_l}(x,t)^{-1/2}(B_{N_l}(x, t)-b^l(x_1, \ldots, x_{N_l}, t))|^2\,
\mu_t(dx)\,dt=0.
$$

(ii)
Assume that $a^{ij}=\delta^{ij}$.
Let $\widetilde{P}_N(x,t)=(P_Nx,t)$
and let $\mathbb{E}_{\mu}[\, \cdot \,|\widetilde{P}_N=(x, t)]$
be the corresponding conditional expectation. Then condition (B) is equivalent to the
following:
for every $\varepsilon>0$ and every natural number $d$ there exists a natural number $N\ge d$
such that
$$
\int_0^{T_0}\int_{\mathbb{R}^\infty}
\sum_{k=1}^{N}\Bigl|B^k(x, t)-\mathbb{E}_{\mu}
[B^k |\widetilde{P}_N=(x, t)]\Bigr|^2\,\mu_t(dx)\,dt<\varepsilon.
$$
This condition is known in Euclidian quantum field theory as the H{\o}egh-Krohn condition
 (see~\cite{AH-K77})
and has been used, e.g., to prove Markov uniqueness for semigroups (see \cite{RZ92}).
\end{remark}

\begin{theorem}\label{th1}
Assume that condition {\rm (A)} holds. Then the set $\mathcal{P}_{\nu}$ contains at most one element.
\end{theorem}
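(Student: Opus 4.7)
The plan is to show that any two elements $\mu^1,\mu^2\in\mathcal{P}_\nu$ coincide by proving that the signed measure $\sigma:=\mu^1-\mu^2$ satisfies $\int \zeta\,d\sigma_T=0$ for every $d\ge 1$, every cylindrical test function $\zeta\in C_0^\infty(\mathbb{R}^d)$, and a.e.\ $T\in(0,T_0)$. Since $\mathcal{P}_\nu$ is convex, $\mu:=\tfrac12(\mu^1+\mu^2)$ lies in $\mathcal{P}_\nu$ too and dominates $|\sigma|\le 2\mu$, which will be crucial at the last step. The core idea is the approximative Holmgren scheme: instead of inserting into (\ref{e2}) a solution of the (intractable) infinite-dimensional backward Kolmogorov equation for $L$, I will insert a classical solution $\varphi$ of a finite-dimensional backward equation whose drift $b$ approximates $B_N$ in the sense of condition (B).

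Fix $T\in(0,T_0)$, $d\ge 1$, $\zeta\in C_0^\infty(\mathbb{R}^d)$ and $\varepsilon>0$. By condition (B) choose $N\ge d$ and a $C_b^{2,1}$ map $b=(b^1,\ldots,b^N)$ on $\mathbb{R}^N\times[0,T_0]$ with
$$\int_0^{T_0}\!\!\int_{\mathbb{R}^\infty}|A_N^{-1/2}(B_N-b)|^2\,d\mu_t\,dt<\varepsilon.$$
Set $L_N^b\varphi=\sum_{i,j=1}^N a^{ij}\partial_{x_i}\partial_{x_j}\varphi+\sum_{i=1}^N b^i\partial_{x_i}\varphi$. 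Condition (A) makes $A_N$ uniformly elliptic and $x$-H\"older continuous on $\mathbb{R}^N\times[0,T_0]$, and $b$ is $C_b^{2,1}$, so standard parabolic theory produces a classical solution $\varphi\in C_b^{2,1}(\mathbb{R}^N\times[0,T])$ to the backward Cauchy problem
$$\partial_t\varphi+L_N^b\varphi=0\text{ on }\mathbb{R}^N\times(0,T),\qquad \varphi(\cdot,T)=\zeta,$$
with $\|\varphi\|_\infty\le\|\zeta\|_\infty$ by the maximum principle. Regarded as a cylindrical function on $\mathbb{R}^\infty$, $\varphi$ satisfies $\partial_{x_i}\varphi=\partial_{x_i}\partial_{x_j}\varphi=0$ whenever $\max(i,j)>N$. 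Applying Lemma \ref{lem1.1} to $\sigma$ (together with Remark \ref{rem1.3}, so that $T$ lies in the common full-measure set for $\varphi$ and $\varphi(\cdot,T)$), using $\sigma|_{t=0}=0$ and $\partial_s\varphi+L_N^b\varphi=0$, only the residual drift error survives:
$$\int_{\mathbb{R}^\infty}\zeta\,d\sigma_T=\int_0^T\!\!\int_{\mathbb{R}^\infty}\sum_{i=1}^N(B^i-b^i)\,\partial_{x_i}\varphi\,d\sigma_s\,ds.$$

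To control this I will also derive an energy estimate for $A_N^{1/2}\nabla_N\varphi$ with respect to $\mu$. Using $\partial_{x_i}\partial_{x_j}(\varphi^2)=2\varphi\partial_{x_i}\partial_{x_j}\varphi+2\partial_{x_i}\varphi\,\partial_{x_j}\varphi$ and the equation for $\varphi$ gives
$$\partial_s\varphi^2+L\varphi^2=2\langle A_N\nabla_N\varphi,\nabla_N\varphi\rangle+2\varphi\sum_{i=1}^N(B^i-b^i)\partial_{x_i}\varphi.$$
Plugging $\varphi^2$ into (\ref{e2}) for $\mu$, discarding the nonnegative $\int\varphi^2(x,0)\,d\nu$ and absorbing the cross term via $2|\varphi|\,|A_N^{-1/2}(B_N-b)|\,|A_N^{1/2}\nabla_N\varphi|\le|A_N^{1/2}\nabla_N\varphi|^2+\varphi^2|A_N^{-1/2}(B_N-b)|^2$, I obtain
$$\int_0^T\!\!\int_{\mathbb{R}^\infty}|A_N^{1/2}\nabla_N\varphi|^2\,d\mu_s\,ds\le\|\zeta\|_\infty^2(1+\varepsilon).$$
Rewriting the residual drift as $\langle A_N^{-1/2}(B_N-b),\,A_N^{1/2}\nabla_N\varphi\rangle$, applying the Cauchy--Schwarz inequality together with $|\sigma|\le 2\mu$ and inserting the two bounds yields
$$\Bigl|\int_{\mathbb{R}^\infty}\zeta\,d\sigma_T\Bigr|\le 2\|\zeta\|_\infty\sqrt{\varepsilon(1+\varepsilon)}.$$
Since $\varepsilon>0$ was arbitrary, $\int\zeta\,d\sigma_T=0$ for a.e.\ $T$, and then by Remark \ref{rem1.3} for every $T$; varying $\zeta$ and $d$ shows $\sigma=0$. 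The main technical hurdle is the very first analytic ingredient --- constructing the classical solution $\varphi\in C_b^{2,1}(\mathbb{R}^N\times[0,T])$ of the finite-dimensional backward problem with the uniform bound $\|\varphi\|_\infty\le\|\zeta\|_\infty$ --- and condition (A) is tailored precisely so that parabolic Schauder and maximum-principle estimates deliver it uniformly on each finite slice.
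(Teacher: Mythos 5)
Your proposal is correct and follows essentially the same approximative Holmgren argument as the paper: approximate the drift via condition (B) applied to the half-sum measure, solve the finite-dimensional backward Cauchy problem with drift $b$, use the maximum principle, insert the solution into the identity of Lemma~\ref{lem1.1} for the difference measure, derive the energy bound on $|A_N^{1/2}\nabla\varphi|^2$ by plugging $\varphi^2$ into (\ref{e2}) for the half-sum, and conclude by Cauchy--Schwarz with $|\sigma|\le 2\mu$. The only point to state a bit more carefully is that the admissible times $T$ should be fixed in the intersection of the sets $J^{\mu^i}_{\zeta}$ and $J^{\mu^i}_{\zeta^2}$ (the latter needed to evaluate the energy identity at $t=T$ via Remark~\ref{rem1.3}), a full-measure set depending only on $\zeta$ and not on $\varepsilon$ --- exactly as in the paper.
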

\begin{proof}
Assume that two measures $\sigma^1=\sigma^1_tdt$ and $\sigma^2=\sigma^2_tdt$ belong to $\mathcal{P}_{\nu}$.
By our assumption about $\mathcal{P}_\nu$,
$\sigma=(\sigma^1+\sigma^2)/2\in\mathcal{P}_{\nu}$.
Let $d\in\mathbb{N}$, $\psi\in C^{\infty}_0(\mathbb{R}^d)$ and $|\psi(x)|\le 1$ for all $x\in\mathbb{R}^d$.
By condition~(B) for every $\varepsilon>0$
there exist a natural number $N\ge d$ and
a $C^{2, 1}_b$-mapping $(b^k)_{k=1}^N$ on
$\mathbb{R}^N\times [0, T_0]$ such that
$$
\int_0^{T_0}\int_{\mathbb{R}^\infty}|A_N^{-1/2}(x,s)(B_N(x, s)-b(x_1, \ldots, x_N, s))|^2\,
\sigma_s(dx)\,ds<\varepsilon.
$$
Fix
$t\in J^{\sigma^1}_{\psi}\bigcap J^{\sigma^2}_{\psi}\bigcap J^{\sigma^1}_{\psi^2}\bigcap J^{\sigma^2}_{\psi^2}$.
Let $f$ be a solution to the finite-dimensional Cauchy problem
\begin{equation}\label{e2.1}
\left\{\begin{array}{l}
\partial_tf+\sum_{i, j=1}^{N}a^{ij}\partial_{x_i}\partial_{x_j}f
+\sum_{i=1}^Nb^i\partial_{x_i}f=0 \quad \hbox{on $\mathbb{R}^N\times (0,t)$}, \\
f(t, x)=\psi(x).
\end{array}\right.
\end{equation}
It is  known (see, e.g., \cite[Theorem~1.3]{ED1} and also \cite{ED2}, \cite{Fried}, and \cite{Str})
that a solution exists and belongs to
the class $C_b(\mathbb{R}^N \times [0, t])
\bigcap C^{2,1}_b(\mathbb{R}^N \times (0, t))$.
Moreover, according to the maximum principle $|f(x, s)|\le 1$
for all $(x,s)\in \mathbb{R}^N\times [0,t]$.
Set $\mu=\sigma^1-\sigma^2$.
The measure $\mu$ solves the Cauchy problem (\ref{e1})
with zero initial condition.
Applying Lemma \ref{lem1.1} and Remark \ref{rem1.3} with $\varphi=f$, we obtain
$$
\int_{\mathbb{R}^{\infty}}f(x, t)\, \mu_t(dx)=
\int_0^t\int_{\mathbb{R}^{\infty}}
\Bigl[ \partial_s f
+\sum_{i, j=1}^{N}a^{ij}\partial_{x_j}\partial_{x_i}f
+\sum_{i=1}^NB^i\partial_{x_i}f\Bigr]
\, d\mu_s\,ds.
$$
Therefore,
\begin{equation}\label{e2.2}
\int_{\mathbb{R}^{\infty}}\psi
\,d\mu_t=\int_0^t\int_{\mathbb{R}^{\infty}}
\langle B-b, \nabla f\rangle\,d\mu_s\,ds.
\end{equation}
Let us estimate the following expression:
$$
\int_0^t\int_{\mathbb{R}^{\infty}}|\sqrt{A_N}\nabla f|^2\, d\sigma_s\,ds.
$$
Using (\ref{e2}) for $\sigma$ and $\varphi=f^2$, taking into account that
$(\partial_s+L)(f^2)=2f(\partial_s+L)f+2|\sqrt{A_N}\nabla f|^2$,
 and recalling that
$t\in J^{\sigma^1}_{\psi^2}\bigcap J^{\sigma^2}_{\psi^2}$, we obtain from (\ref{e2.1})
(again by Remark~\ref{rem1.3}) that
$$
\int_{\mathbb{R}^{\infty}}\psi^2\,d\sigma_t
-\int_{\mathbb{R}^{\infty}} f^2(x, 0)\, \nu(dx)=
2\int_0^t\int_{\mathbb{R}^{\infty}}
\Bigl[
|\sqrt{A_N}\nabla f|^2+f\sum_{i=1}^{N}(B^i-b^i)\partial_{x_i}f\Bigr]
\,d\sigma_s\,ds.
$$
Therefore,
$$
\int_0^t\int_{\mathbb{R}^{\infty}}|\sqrt{A_N}\nabla f|^2\,d\sigma_s\,ds\le
2+\int_0^{T_0}
\int_{\mathbb{R}^\infty}|A_N^{-1/2}(x,s)(B_N(x, s)-b(x_1, \ldots, x_N, s))|^2\,
\sigma_s(dx)\,ds.
$$
Thus we obtain the estimate
\begin{equation}\label{e2.3}
\int_0^t\int_{\mathbb{R}^{\infty}}|\sqrt{A_N}\nabla f|^2\,d\sigma_s\,ds\le 2+\varepsilon.
\end{equation}
Applying (\ref{e2.2}) and (\ref{e2.3}) and the fact that $|\mu|\le\sigma^1+\sigma^2=2\sigma$ we have
$$
\int_{\mathbb{R}^{\infty}}\psi\,d\mu_t\le 2\sqrt{\varepsilon(2+\varepsilon)}.
$$
Since $\varepsilon>0$ was arbitrary, we obtain
$$
\int_{\mathbb{R}^{\infty}}\psi\,d\mu_t\le 0.
$$
Replacing $\psi$ with $-\psi$ we arrive at the equality
$$\int_{\mathbb{R}^{\infty}}\psi\,d\mu_t=0.
$$
Therefore,
$$
\int_{\mathbb{R}^{\infty}}
\psi\,d\sigma^1_t=\int_{\mathbb{R}^{\infty}}\psi\,d\sigma^2_t
$$
for every
$t\in J^{\sigma^1}_{\psi}\bigcap J^{\sigma^2}_{\psi}\bigcap J^{\sigma^1}_{\psi^2}\bigcap J^{\sigma^2}_{\psi^2}$,
hence for almost every $t\in[0, T_0]$. Thus, $\sigma^1=\sigma^2$.
\end{proof}

We now consider a typical example to which the previous theorem applies,
namely, the
Fokker--Planck--Kolmogorov equations associated with
stochastic partial differential equations of  reaction diffusion type
on a domain $D\subset \mathbb{R}^d$, i.e.,
 $$
du(t)=\sigma(u(t),t)dW(t)+B(u(t),t)dt, \ t\in [0,T_0],
$$
where $\sigma\sigma^{*}=A$ and $u(t)\in L^2(D)$. Furthermore, $W(t)$, $t\ge 0$,
is a cylindrical Wiener process in $L^2(D)$ on a stochastic basis
$(\Omega, \mathcal{F}, (\mathcal{F}_t), {\rm P})$ and $u(0)$ has $\nu$ as given law.
Below we denote by $u$ generic elements of functional spaces such as $L^2(D)$
 which we embed into $\mathbb{R}^\infty$
 (e.g., by using a suitable orthonormal basis)
 to be able to apply our framework above.

\begin{example}\label{ex2.4}
{\rm
(``Reaction diffusion equations in dimension $d$ with infinite trace'')
Suppose that $D\subset\mathbb{R}^d$ is an open bounded set and $\{e_k\}$
is an eigenbasis of  the Laplacian
on $L^2(D)$ with zero boundary condition, i.e., $\Delta e_k=-\lambda_k^2e_k$.
Let $f\colon\, D\times\mathbb{R}\times [0,T_0]\to \mathbb{R}$ be a Borel function.
Set $B(u,t)(z)=\Delta u(z)+f(z,u(z),t)$, $z\in D$, i.e.,
$$
B^i(u,t)=-\lambda_i^2u_i+\langle f(\,\cdot\,,u(\cdot),t), e_i\rangle_{2},
\ u\in L^2(D), \ u_i=\langle u,e_i\rangle_{2}.
$$
Assume that the coefficients $a^{ij}$ satisfy (A) with $\gamma_N=\gamma>0$ independent on $N$.
For instance, the last assumption is true if
$a^{ij}=\langle Se_i, e_j\rangle_{2}$
for some invertible symmetric positive operator $S$ on $L^2(D)$.

Assume also that there exist a Borel function $C\ge 0$
and a number $m\ge 1$ such that
$$
|f(z, u,t)|\le C(t)+C(t)|u|^m.
$$
Set
$$
L\varphi=\sum_{i, j=1}^{\infty}a^{i j}\partial_{e_i}\partial_{e_j}\varphi
+\sum_{i=1}^{\infty}B^i\partial_{e_i}\varphi.
$$
Then there is at most one probability solution $\mu=\mu_t(du)\,dt$,
i.e., $\mu_t\ge 0$ and $\mu_t(\mathbb{R}^{\infty})=1$ for every
$t\in (0, T_0)$, to the Cauchy problem (\ref{e1}) such that
$$
\int_0^{T_0} C(t)^2\int_{L^2(D)}\|u\|_{2m}^{2m}\,\mu_t(du)\,dt<\infty.
$$
}\end{example}
\begin{proof}
The mapping $u\mapsto (u_i)$ defines an embedding $L^2(D)\to\mathbb{R}^\infty$.
Extending $B^i$ and $a^{ij}$ to all of $\mathbb{R}^\infty \times  [0, T_0]$
by zero we end up
in the framework described above.
Set $F^i(u,t)=\langle f(\,\cdot\,,u(\cdot),t), e_i\rangle_{2}$.
Note that
$$
\sum_{i=1}^{\infty}|F^i(u,t)|^2=
\|f(\,\cdot\, ,u(\cdot),t)\|_{L^2}^2\le C(t)^2+C(t)^2\|u\|_{2m}^{2m}.
$$
Thus we have $B^i=A^i+F^i$, where $A^i(u)=-\lambda_i^2 u_i$
and $\|F\|_{l^2}\in L^2(\mu)$,
and Example~\ref{ex2.1}(iii) applies with $\alpha_k=1$.
\end{proof}

Let now $d=1$, $D=(0, 1)$ and $\Delta=\frac{d^2}{dz^2}$.
We recall that according to  \cite{BDPR10} and \cite{BDPR11}
if $a^{ij}=\alpha \delta^{ij}$ with $\alpha>0$ and if
$$
f(z, u,t)=f_1(z, u,t)+f_2(z, u,t),
$$
where $(u,t)\mapsto f_i(z,u,t)$ are continuous for each~$z$
and for some nonnegative functions $c_1, c_3\in L^2((0, T_0))$, $c_2\in L^1((0, T_0))$
and all $t, z, u$ we have

\, (i) \, $|f_1(z, u,t)|\le c_1(t)(1+|u|^m)$,

\, (ii) \, $(f_1(z, u,t)-f_1(z, v,t))(u-v)\le c_2(t)|u-v|^2$,

\, (iii) \, $|f_2(z, u,t)|\le c_3(t)(1+|u|)$,

\noindent
then for every initial value $\nu$ with $\|u\|_{2m}^{2m}\in L^1(\nu)$
there exists a probability solution $\mu$ of the Cauchy problem (\ref{e1})
such that $(1+c_1(t)+c_3(t))^2(1+\|u\|_{2m}^{2m})\in L^1(\mu)$.
It follows from the previous example that such a solution is unique, which
improves the uniqueness result from \cite{BDPR10} and \cite{BDPR11}.

\vskip .1in

We now present another uniqueness condition that applies to degenerate (even zero)
diffusion matrices.
Let us list our new assumptions (${\rm A}'$) and (${\rm B}'$).

\, (${\rm A}'$) \, $A(x,t)=(a^{ij}(x,t))$, where
each function $a^{ij}$ is bounded and
depends only on the variables
$x_1, x_2, \ldots, x_{\max\{i, j\}},t$ and for every natural number $N$
the matrix $A_N$ is symmetric nonnegative and
the elements $\sigma^{ij}_N$ of the matrix $\sigma_N:=\sqrt{A_N}$
are in the class $C^{\infty}(\mathbb{R}^N\times[0, T_0])$.

Let $\nu$ be a Borel probability measure on $\mathbb{R}^{\infty}$ and
let $\mathbb{P}_{\nu}$ be some {\it convex} set of probability solutions $\mu=\mu_t(dx)\,dt$ of (\ref{e1}),
i.e., $\mu_t\ge 0$ and $\mu_t(\mathbb{R}^{\infty})=1$ for every
$t\in (0, T_0)$,
such that $|B^k|\in L^1(\mu)$ for each $k\in\mathbb{N}$ and the following condition holds:

\, (${\rm B}'$) \, for every $\varepsilon>0$ and every natural number $d$ there exist a natural number $N\ge d$,
a $C^{\infty}$-mapping $b=(b^k)_{k=1}^N\colon\,
\mathbb{R}^N\times [0, T_0]\to \mathbb{R}^N$, a function $\theta$ on $\mathbb{R}^N$,
a function $V\in C^2(\mathbb{R}^N)$ with $V\ge 1$,
and numbers $C_0\ge 0$ and $\delta>0$ such that

\, (i) \, $\sqrt{V(P_Nx)}$, $|B_N(x, t)-b(P_Nx, t)|\sqrt{V(P_Nx)}\in L^1(\mu)$
and
$$
\int_0^{T_0}\int_{\mathbb{R}^\infty}
|B_N(x, t)-b(P_Nx, t)|\sqrt{V(P_Nx)}e^{C_0(T_0-t)/2}\,
\mu_t(dx)\,dt<\varepsilon,
$$
where $B_N=(B^1, \ldots, B^N)$;

\, (ii) \, the matrix $\mathcal{B}=(\partial_{x_j}b^i)$ and the operator
$$
L_{a, b}\varphi(x, t)=\sum_{i,j\le N} a^{ij}(x,t)\partial_{x_i}\partial_{x_j}\varphi(x, t)+
\sum_{i\le N} b^i(x, t)\partial_{x_i}\varphi(x, t)
$$
satisfy the estimates
$$
\langle \mathcal{B}(x, t)h, h\rangle \le \theta(x)|h|^2 \ \ \forall\, h\in\mathbb{R}^N, \quad
L_{a, b}V(x, t)\le (C_0-\Lambda(x, t))V(x),
$$
where
$$
\Lambda(x, t):=
4\sum_{i, j, k\le N}\Bigl|\partial_{x_k}\sigma_N^{ij}(x, t)\Bigr|^2
+2\theta(x)+\delta(1+|x|^2)^{-1}|b(x, t)|^2)
$$
for every $(x, t)\in \mathbb{R}^N\times[0, T_0]$.

In the notation for $N,b,\theta,V,C_0,\delta$ we omit indication of the fact that they depend
on $\varepsilon$ and $d$. Recall also that $|\,\cdot\,|$ is the standard Euclidean norm.

\begin{theorem}\label{th2}
If {\rm$({\rm A}')$} holds, then the set $\mathbb{P}_{\nu}$ contains at most one element.
\end{theorem}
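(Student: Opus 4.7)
The plan is to mirror the proof of Theorem~\ref{th1}. Take $\sigma^1,\sigma^2\in\mathbb{P}_\nu$, set $\sigma=(\sigma^1+\sigma^2)/2\in\mathbb{P}_\nu$ and $\mu=\sigma^1-\sigma^2$, so that $\mu$ satisfies (\ref{e1}) with zero initial condition and $|\mu|\le 2\sigma$. Fix $d\in\mathbb{N}$, $\psi\in C_0^\infty(\mathbb{R}^d)$ with $|\psi|\le 1$, and for each $\varepsilon>0$ invoke~$({\rm B}')$ to obtain $N\ge d$, a $C^\infty$-map $b$, the smooth square root $\sigma_N=\sqrt{A_N}$, a Lyapunov function $V\ge 1$, a function $\theta$, and constants $C_0,\delta$. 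Solve the backward Cauchy problem
\begin{equation*}
\partial_sf+\sum_{i,j=1}^N a^{ij}\partial_{x_i}\partial_{x_j}f+\sum_{i=1}^N b^i\partial_{x_i}f=0 \ \text{on } \mathbb{R}^N\times[0,t),\qquad f(x,t)=\psi(x).
\end{equation*}
Because $b,\sigma_N$ are smooth and $L_{a,b}V\le C_0V$, the associated SDE $dX_r=b\,dr+\sigma_N\,dW_r$ has a smooth global flow, and $f(x,s)=\mathbb{E}\psi(X_t^{x,s})$ is a classical solution with $|f|\le 1$. Applying a suitably extended version of Lemma~\ref{lem1.1} to $\mu$ with $\varphi=f$ will yield
\begin{equation*}
\int_{\mathbb{R}^\infty}\psi\,d\mu_t=\int_0^t\int_{\mathbb{R}^\infty}\langle B_N-b,\nabla f\rangle\,d\mu_s\,ds,
\end{equation*}
which I then plan to bound using~$({\rm B}')$(i).

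The crucial technical input will be the weighted gradient estimate
\begin{equation*}
|\nabla f(x,s)|^2\le K\,V(x)\,e^{C_0(t-s)}\qquad\text{for all }(x,s)\in\mathbb{R}^N\times[0,t],
\end{equation*}
with $K$ depending on $\psi$ but not on $x,s,N$. To prove this I would set $Z(x,s):=KV(x)e^{C_0(t-s)}-|\nabla f(x,s)|^2$; for $K\ge\|\nabla\psi\|_\infty^2$ we have $Z(x,t)\ge 0$ by $V\ge 1$. Computing $(\partial_s+L_{a,b})Z$, the commutator $[\partial_{x_k},L_{a,b}]f$ produces two terms: the drift piece $2\langle\mathcal{B}\nabla f,\nabla f\rangle$ is controlled by $2\theta|\nabla f|^2$ via~$({\rm A}')$(ii); the diffusion piece, which is bilinear in $\partial\sigma_N$ and $\sigma_N^T\nabla^2 f$, is absorbed by the Bochner-type term $2|\sqrt{A_N}\nabla^2 f|^2$ arising inside $L_{a,b}|\nabla f|^2$ after a Young inequality, leaving a residue bounded by $4\sum_{i,j,k}|\partial_{x_k}\sigma_N^{ij}|^2\cdot|\nabla f|^2$. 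These are precisely the first two pieces of $\Lambda$, so $(\partial_s+L_{a,b})Z+\Lambda Z\le 0$. Time-reversing $g(x,r):=Z(x,t-r)$ makes $g$ a supersolution, nonnegative at $r=0$, of a parabolic equation with nonnegative zeroth-order potential $\Lambda$; a Feynman--Kac representation (whose well-posedness follows from the no-explosion bound $L_{a,b}V\le C_0V$, which is also needed to control the growth of $g$ at infinity so the comparison is valid) then gives $g\ge 0$ everywhere.

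With the gradient bound in place, the integrability in~$({\rm B}')$(i) and the inclusion $\sqrt V\in L^1(\mu)$ allow me to approximate $f$ by $f\chi_R$ for smooth radial cutoffs $\chi_R$, apply Lemma~\ref{lem1.1}, and pass to the limit via dominated convergence (the terms $L_{a,b}\chi_R$ involve $|b|/(1+|x|)$, which is controlled by $\sqrt{\delta (1+|x|^2)^{-1}|b|^2}$, i.e.\ by $\Lambda$, hence by the available integrability). Inserting the bound on $|\nabla f|$ and using $|\mu|\le 2\sigma$ yields
\begin{equation*}
\Bigl|\int\psi\,d\mu_t\Bigr|\le 2\sqrt K\int_0^{T_0}\!\!\int_{\mathbb{R}^\infty}|B_N-b|\sqrt V\,e^{C_0(T_0-s)/2}\,d\sigma_s\,ds<2\sqrt K\,\varepsilon.
\end{equation*}
Letting $\varepsilon\to 0$ and replacing $\psi$ by $-\psi$ gives $\int\psi\,d\mu_t=0$ for a dense family of $\psi$ and almost every $t$, whence $\sigma^1=\sigma^2$.

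The main obstacle is the gradient estimate itself, specifically checking that the numerical coefficients $4$ in front of $\sum|\partial_k\sigma_N^{ij}|^2$ and $2$ in front of $\theta$ are exactly the constants produced by the Young absorption against the single good term $2|\sqrt{A_N}\nabla^2 f|^2$. The extra $\delta(1+|x|^2)^{-1}|b|^2$ piece in $\Lambda$ does not enter the heuristic commutator bound; its role, I expect, is to provide the slack needed either to handle a lower-order remainder in that calculation or, more likely, to legitimise the cutoff passage-to-the-limit at spatial infinity where $|\nabla\chi_R\cdot b|\lesssim|b|/(1+|x|)$ must be absorbed into the already-controlled quantity. Pinning that down is the delicate point that the brief sketch above finesses.
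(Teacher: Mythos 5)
Your overall strategy is the paper's: approximative Holmgren with a $V$-weighted gradient bound for an auxiliary backward problem, then testing the equation for $\mu=\sigma^1-\sigma^2$ with that solution (cut off in space) and invoking (${\rm B}'$)(i); your identification of how $2\theta(x)$ and $4\sum_{i,j,k}|\partial_{x_k}\sigma_N^{ij}|^2$ arise from the commutator and the Bochner-type term is also correct. The genuine gap is exactly the point you flag and leave open: you never establish that the \emph{non-localized} degenerate Cauchy problem $\partial_sf+L_{a,b}f=0$, $f(\cdot,t)=\psi$, with smooth but unbounded $b$ and merely nonnegative $A_N$, has a classical solution to which the comparison $Z=KVe^{C_0(t-s)}-|\nabla f|^2\ge 0$ can be applied on all of $\mathbb{R}^N$. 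The Feynman--Kac/flow construction does not come for free: to differentiate $\mathbb{E}\,\psi(X^{x,s}_t)$ under the expectation you need moment bounds on the derivative flow, and the only available control is the one-sided bound $\langle\mathcal{B}h,h\rangle\le\theta(x)|h|^2$ with $\theta$ unbounded, so those bounds are of the same nature as the gradient estimate you are trying to prove; likewise the comparison at spatial infinity needs an a priori growth bound on $|\nabla f|^2/V$, which is not available. So the "delicate point the sketch finesses" is not a loose end but the core of the proof.

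The paper closes it by localizing the \emph{operator}, not just the test function: it solves $\partial_tf+\zeta_M L_{a,b}f=0$ with $\zeta_M(x)=\eta\bigl((1+|x|^2)^{\kappa}/M\bigr)$, so the coefficients are smooth and bounded, classical solvability with bounded derivatives is available (Oleinik; Stroock--Varadhan), and the maximum principle is applied to the \emph{bounded} function $w=|\nabla f|^2/(2V)$, giving $|\nabla f|^2\le e^{(C_0+1)(s-t)}V\max|\nabla\psi|^2/2$ uniformly in $M$ (Lemma~\ref{lem2.8}). This is also where the term $\delta(1+|x|^2)^{-1}|b|^2$ in $\Lambda$ is actually consumed: differentiating $\zeta_M L_{a,b}f$ produces $\langle\nabla\zeta_M,\nabla f\rangle\langle b,\nabla f\rangle$ with $|\nabla\zeta_M|\le 4\kappa(1+|x|^2)^{-1/2}|\eta'|$, and $\kappa$ is chosen small relative to $\delta$ to absorb it; your conjecture that this term legitimizes the FPK-testing cutoff is off the mark, since in the actual proof the cutoff $\varphi_K$ in the testing step is handled by plain dominated convergence using $|B^k|\in L^1(\mu)$, $\sqrt V\in L^1(\mu)$ and boundedness of the $a^{ij}$, after choosing $M$ so large that $\zeta_M=1$ on $\{\varphi_K\neq 0\}$ so that no cross terms appear. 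Without localizing the equation itself you have neither a usable existence/regularity statement for $f$ nor a mechanism to justify the comparison at infinity, so the proposal as written does not constitute a proof.
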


\begin{remark}\rm
\, (i) \, If $A=(a^{ij})$ is a constant matrix and $|b(x, t)|\le C_1(N)+C_1(N)|x|$,
then condition~(${\rm B}'$)(ii) can be replaced by
$$
L_{a, b}V(x, t)\le (C_0-2\theta(x))V(x)\quad \forall\, (x, t)\in \mathbb{R}^N\times[0, T_0].
$$

\, (ii) \,
If $A=(a^{ij})$ is a constant matrix and $|b(x, t)|\le C_1(N)+C_1(N)|x|^2$,
then condition~(${\rm B}'$)(ii) can be replaced by
$$
L_{a, b}V(x, t)\le (C_0-2\theta(x)-\delta|x|^2)V(x)
$$
for every $(x, t)\in \mathbb{R}^N\times[0, T_0]$ and some $\delta>0$.

\, (iii) \, Let $a^{ij}=0$ if $i\ne j$ and $a^{ii}(x, t)=\alpha^i(x_1, x_2, \ldots, x_i, t)\ge 0$.
Suppose also that we have
 $|b(x, t)|\le C_1(N)+C_1(N)|x|^2$. Then condition~(${\rm B}'$)(ii) can be replaced by
$$
L_{a, b}V(x, t)\le (C_0-\Lambda(x, t))V(x), \quad
\Lambda(x, t):=4\sum_{i=1}^{N}\sum_{k\le i}\frac{\bigl|\partial_{x_k}\alpha^{i}(x, t)\bigr|^2}{\alpha^i(x, t)}+
2\theta(x)+\delta|x|^2.
$$

\, (iv)\, We note that (${\rm B}'$) is a substantial generalization of
a corresponding condition in \cite{RS}.
\end{remark}

Let us illustrate condition (${\rm B}'$).

For a sequence $\{\lambda_k^2\}_k$, we write
$$
\|x\|_{l^2_{\lambda}}^2=\sum_k\lambda_k^2x_k^2, \quad
(x, y)_{l^2_{\lambda}}=\sum_k\lambda_k^2x_ky_k.
$$

\begin{example}\label{ex2.7}
\rm
We assume here that $A=(a^{ij})_{i,j\ge1}$ is a {\it constant} matrix,
$A_N:=(a^{ij})_{i,j\le N}$ is symmetric nonnegative.

\, (i) \, Let $b^{k}(x, t)=-\lambda_k^2x_k+f^k(x, t)$, $x\in \mathbb{R}^N$.
Then the estimate $\langle \mathcal{B}h, h\rangle\le \theta(x)|h|^2$,
$x,h\in \mathbb{R}^N$,
follows from the estimate
$$
\langle\mathcal{F}(x, t)h, h\rangle\le \theta(x)|h|^2+\|h\|_{l^2_{\lambda}}^2,
\quad x,h\in \mathbb{R}^N,
$$
where $\mathcal{F}=(\partial_{x_j}f^i)_{i,j\le N}$.

\, (ii) \,
Set $V(x)=\exp\bigl(\kappa\sum_{k=1}^Nx_k^2\bigr)$, where $\kappa>0$.
Then the condition on $\theta$ required in (${\rm B}'$) is this:
for some numbers $C_0$ and $\delta>0$ (dependent on $\varepsilon$ and~$d$)
one has
\begin{equation}\label{gener}
\theta(x)\le C_0-\kappa\bigl({\rm tr} A_N
+2\kappa\langle A_N x, x\rangle +\langle b(x, t), x\rangle\bigr)-2^{-1}\delta(1+|x|^2)^{-1}|b(x, t)|^2,
\quad x\in\mathbb{R}^N.
\end{equation}
Let us consider a more specific case:
$b^{k}(x, t)=-\lambda_k^2x_k+f^k(x, t)$, $f(x,t)=(f^k(x,t))_{k=1}^N$,
$\langle f(x, t), x\rangle\le 0$ and $|f^k(x, t)|\le C_1+C_2|x|^2$, where $x\in\mathbb{R}^N$.
Assume that for some $\varepsilon_0>0$ and every $N\ge 1$ one has
$$
\varepsilon_0\bigl(\langle A_Nx, x\rangle +|x|^2\bigr)\le \|x\|_{l^2_{\lambda}}^2,
\quad x\in\mathbb{R}^N.
$$
Then condition (${\rm B}'$)(ii) can be rewritten in the following form:
$$
\langle\mathcal{F}(x, t)h, h\rangle\le \theta(x)|h|^2+\|h\|_{l^2_{\lambda}}^2,
\quad x,h\in \mathbb{R}^N,
$$
where $\mathcal{F}=(\partial_{x_j}f^i)_{i,j\le N}$,
and for every $x\in\mathbb{R}^N$
$$
\Theta(x)\le C_0-\kappa{\rm tr} A_N+2^{-1}\kappa(\varepsilon_0-\kappa)\|x\|_{l^2_{\lambda}}^2.
$$
Note that in this case we take $V(x)$ with $\kappa<\varepsilon_0/4$.

This assertion follows from (\ref{gener}) if we choose $\delta>0$ such that
$$
\delta(1+|x|^2)^{-1}|b(x, t)|^2\le \varepsilon_0\kappa|x|^2+1.
$$

\, (iii) \,
Let $V(x)=\exp\bigl(\kappa\|x\|^2_{l^2_{\lambda}})$.
Then the condition on $\theta$ required in (${\rm B}'$) is this:
for some constants $C_0$ and $\delta>0$ one has
\begin{equation}\label{gener1}
\theta(x)\le C_0-\kappa\Bigl(\sum_{i=1}^N a^{ii}\lambda^2_i
+2\kappa\sum_{i, j\le N} a^{ij}\lambda^2_i\lambda^2_jx_ix_j+\langle b(x, t), x\rangle_{l^2_{\lambda}}\Bigr)
-2^{-1}\delta(1+|x|^2)^{-1}|b(x, t)|^2,
\quad x\in \mathbb{R}^N.
\end{equation}
Let us consider a more specific case:
$b^{k}(x, t)=-\lambda_k^2x_k+f^k(x, t)$, $f(x,t)=(f^k(x,t))_{i=1}^N$,
$\langle f(x, t), x\rangle_{l^2_{\lambda}}\le 0$ and
$|f^k(x, t)|\le C_1+C_2|x|^2$, where $x\in\mathbb{R}^N$.
Assume that for some $\varepsilon_0>0$ and every $N\ge 1$ one has
$$
\varepsilon_0\sum_{i,j\le N}a^{ij}\lambda^2_i\lambda^2_jx_ix_j+\varepsilon_0|x|^2\le \sum_{i\le N}
\lambda_i^4x_i^2.
$$
Then condition (${\rm B}'$)(ii) can be rewritten in the following form:
$$
\langle\mathcal{F}(x, t)h, h\rangle\le \theta(x)|h|^2+\|h\|_{l^2_{\lambda}}^2,
\quad x,h\in \mathbb{R}^N,
$$
where $\mathcal{F}=(\partial_{x_j}f^i)_{i,j\le N}$,
and for every $x\in\mathbb{R}^N$
$$
\theta(x)\le C_0-\kappa\sum_{i=1}^N a^{ii}\lambda^2_i+2^{-1}\kappa(\varepsilon_0-\kappa)\sum_{i\le N}\lambda_i^4x_i^2.
$$
Note that in this case we take $V(x)$ with $\kappa<\varepsilon_0/4$.

This assertion follows (\ref{gener1})
if we choose $\delta>0$ such that
$$
\delta(1+|x|^2)^{-1}|b(x, t)|^2\le \varepsilon_0\kappa|x|^2+1.
$$
\end{example}

For the proof of Theorem~\ref{th2} we need the following lemma.

Let  $\eta\in C^{\infty}_0(\mathbb{R}^1)$ be such that
$\eta(x)=1$ if $|x|\le 1$ and $\eta(x)=0$ if $|x|>2$,
$0\le\eta\le 1$ and there exists a number $C>0$ such that $|\eta'(x)|^2\eta^{-1}(x)\le C$
for every $x$.

\begin{lemma}\label{lem2.8}
Assume that there exist
a function $\theta$ on $\mathbb{R}^N$, a function $V\in C^2(\mathbb{R}^N)$ with
$V\ge 1$, and numbers $C_0\ge 0$ and $\delta>0$
such that for all $(x, t)\in \mathbb{R}^N\times[0, T_0]$, $h\in\mathbb{R}^N$ one has
$$
\langle \mathcal{B}(x, t)h, h\rangle
\le \theta(x)|h|^2, \quad \mathcal{B}=(\partial_{x_j}b^i)_{i,j\le N},
$$
$$
L_{a, b}V(x, t)\le (C_0-\Lambda(x, t))V(x), \quad
\Lambda(x, t):=
4\sum_{i, j, k\le N}\bigl|\partial_{x_k}\sigma_N^{ij}(x, t)\bigr|^2
+2\theta(x)+\delta(1+|x|^2)^{-1}|b(x, t)|^2).
$$
Let $s\in (0, T_0)$. Then there exists a number $\kappa>0$ such that for every $M>0$
the Cauchy problem
$$
\partial_tf+\zeta_{M}L_{a, b}f=0,
\quad
f|_{t=s}=\psi,
$$
where $\psi\in C^{\infty}_b(\mathbb{R}^N)$, $\zeta_{M}(x)=\eta\bigl((1+|x|^2)^{\kappa}/M\bigr)$,
has a smooth solution $f$  such that
$$
|f(x, t)|\le \max_x|\psi(x)|, \quad
|\nabla f(x, t)|^2\le e^{(C_0+1)(s-t)}V(x)\max_x|\nabla\psi(x)|^2/2.
$$
\end{lemma}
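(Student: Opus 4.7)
The plan is to follow a Bernstein-type maximum principle argument. After reversing time via $\tau = s-t$, the equation becomes the forward problem $\partial_\tau f = \zeta_M L_{a,b} f$ on $\mathbb{R}^N \times [0, s]$ with $f|_{\tau=0} = \psi$. Since $\sigma_N$, $b$, and $\zeta_M$ are smooth and $\zeta_M$ is compactly supported, existence of a smooth solution is obtained by viscosity regularization: I solve the uniformly parabolic problem with $\zeta_M L_{a,b} + \varepsilon\Delta$ in place of $\zeta_M L_{a,b}$, obtain uniform-in-$\varepsilon$ a priori bounds via the same Bernstein computation outlined below, and pass to the limit using interior Schauder estimates on $\{\zeta_M > 0\}$. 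Outside $\mathrm{supp}\,\zeta_M$ the equation degenerates to $\partial_\tau f = 0$, so $f = \psi$ there and smoothness extends across. The sup-norm bound $|f| \le \max|\psi|$ is then the standard maximum principle for the forward equation (no zero-order term, $\zeta_M \ge 0$, $A_N \ge 0$).

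The gradient estimate is the crux. Setting $u = |\nabla f|^2$, differentiating the equation in $x_k$, multiplying by $\partial_k f$ and summing gives, after the $\partial_k(L_{a,b}f)$ cross-terms cancel,
\[
(\partial_\tau - \zeta_M L_{a,b}) u = -2\zeta_M |\sigma_N \nabla^2 f|^2 + 2\zeta_M \sum_k (\partial_k f) \sum_{ij}(\partial_k a^{ij})\partial_i \partial_j f + 2\zeta_M \langle \mathcal{B}^\top \nabla f, \nabla f\rangle + 2\sum_k (\partial_k f)(\partial_k \zeta_M) L_{a,b} f.
\]
Expanding $\partial_k a^{ij} = \sum_l[(\partial_k \sigma_N^{il})\sigma_N^{jl} + \sigma_N^{il}(\partial_k \sigma_N^{jl})]$ and using Cauchy--Schwarz with Young's inequality with a carefully chosen weight, the middle term is bounded by $\zeta_M |\sigma_N \nabla^2 f|^2 + 4\zeta_M \bigl(\sum_{ijk}|\partial_k \sigma_N^{ij}|^2\bigr)|\nabla f|^2$, whose diffusion piece is absorbed into the available $-2\zeta_M |\sigma_N \nabla^2 f|^2$. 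The hypothesis $\langle \mathcal{B}^\top h, h\rangle \le \theta |h|^2$ handles the third term. Recognizing exactly the quantity $\Lambda$ appearing in the assumption (up to the $\delta(1+|x|^2)^{-1}|b|^2$ slack), one arrives at
\[
(\partial_\tau - \zeta_M L_{a,b}) u \le \zeta_M \bigl(\Lambda(x, t) - \delta(1+|x|^2)^{-1}|b|^2\bigr) u + 2\sum_k (\partial_k f)(\partial_k \zeta_M) L_{a,b} f.
\]

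The main obstacle is the residual cutoff term. Since $\zeta_M(x) = \eta((1+|x|^2)^\kappa / M)$ and $|\eta'|^2/\eta \le C$, the derivatives satisfy $|\partial_k \zeta_M|^2 \le C\kappa^2 x_k^2 (1+|x|^2)^{2(\kappa-1)} M^{-2} \zeta_M$, so $|\partial_k \zeta_M|^2/\zeta_M$ is controlled by a constant multiple of $\kappa^2 (1+|x|^2)^{-1}$ times a bounded factor on the transition annulus. Splitting $L_{a,b} f$ into its diffusion and drift parts, the diffusion part can be absorbed via Young's inequality into a residual small multiple of $\zeta_M|\sigma_N \nabla^2 f|^2$ kept in reserve from the first absorption step, while the drift part is absorbed into the slack $\zeta_M \delta(1+|x|^2)^{-1}|b|^2 u$ --- this is precisely why the $\delta$-term was built into $\Lambda$. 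Choosing $\kappa > 0$ small enough (depending on $\delta$ and on $\|\sigma_N\|$) makes every constant work out.

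To conclude, set $\Phi = u - \tfrac{1}{2}K e^{(C_0+1)\tau} V$ with $K = \max|\nabla\psi|^2$. Using $L_{a,b}V \le (C_0 - \Lambda) V$, the previous inequality yields
\[
(\partial_\tau - \zeta_M L_{a,b})\Phi \le \zeta_M \Lambda\, \Phi - \tfrac{1}{2}K e^{(C_0+1)\tau}(1 + (1-\zeta_M) C_0) V \le \zeta_M \Lambda\, \Phi - \tfrac{1}{2}K e^{(C_0+1)\tau} V.
\]
At $\tau = 0$, $\Phi(x,0) = |\nabla\psi(x)|^2 - \tfrac{1}{2}K V(x) \le K - \tfrac{1}{2}K V(x)$, which is handled by a standard approximation (replacing $K$ by $K' = K+\epsilon$ and exploiting that any positive interior maximum of $\Phi$ would force $\zeta_M \Lambda \Phi \ge \tfrac{1}{2}K e^{(C_0+1)\tau}V$, incompatible with $V \ge 1$ when properly combined with the growth control from $V$ and a barrier at infinity). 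Letting the approximation parameters tend to their limits delivers the stated gradient bound. I expect the technical heart of the argument to be the delicate balancing of constants in the Bernstein computation so that the cutoff contribution is precisely absorbable by the slack $\delta$-term in $\Lambda$ --- this is the reason for the specific form of $\zeta_M$ and for the precise shape of the Lyapunov condition on $V$.
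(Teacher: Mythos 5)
Your Bernstein computation is essentially the paper's argument: the same differentiated identity, the same splitting of $\partial_{x_k}a^{ij}$ via $\sigma_N$, the bound $\langle\mathcal{B}\nabla f,\nabla f\rangle\le\theta|\nabla f|^2$, and the choice of $\kappa$ small so that the cutoff contributions (using $|\eta'|^2/\eta\le C$ and $|\nabla\zeta_M|\le 4\kappa(1+|x|^2)^{-1/2}|\eta'|$) are absorbed partly into the reserved $-\zeta_M|\sigma_N\nabla^2 f|^2$ (via $(\sum_{ij}a^{ij}\partial_{x_i}\partial_{x_j}f)^2\le \operatorname{tr}A_N\sum_{ijk}a^{ij}\partial^2_{x_ix_k}f\,\partial^2_{x_jx_k}f$) and partly into the $\delta(1+|x|^2)^{-1}|b|^2$ slack. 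But your concluding step has a genuine gap. For $\Phi=u-\tfrac12Ke^{(C_0+1)\tau}V$ you arrive at $(\partial_\tau-\zeta_ML_{a,b})\Phi\le \zeta_M\Lambda\,\Phi-\tfrac12Ke^{(C_0+1)\tau}V$, and then claim that a positive interior maximum, which would force $\zeta_M\Lambda\Phi\ge\tfrac12Ke^{(C_0+1)\tau}V$, is ``incompatible with $V\ge1$.'' It is not: the zero-order coefficient $\zeta_M\Lambda$ is neither sign-definite nor bounded above ($\theta$, $|b|^2$ and $|\partial_{x_k}\sigma_N^{ij}|^2$ can be arbitrarily large), so at a point where $\Lambda$ is huge the inequality $\zeta_M\Lambda\Phi\ge\tfrac12Ke^{(C_0+1)\tau}V$ is perfectly consistent, and no contradiction follows; moreover on the unbounded domain an interior maximum need not exist without a barrier you have not constructed. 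A secondary inaccuracy: absorbing the cutoff terms necessarily produces a constant (not $\zeta_M$-multiplied) term $c(\kappa)u$ with $c(\kappa)\le1$; carrying it through your computation exactly consumes the ``good'' term $-\tfrac12Ke^{(C_0+1)\tau}V$, so even the strict negativity you rely on disappears.

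The paper's resolution is precisely the missing idea: instead of subtracting a multiple of $V$, divide by it. Writing $u=wV$, the inequality $\partial_tu+\zeta_ML_{a,b}u+Qu\ge0$ with $Q\le 1+\zeta_M\Lambda$ becomes $\partial_tw+\zeta_ML_{a,\widetilde b}\,w+\widetilde Q\,w\ge0$, where $\widetilde b^k=b^k+2a^{kj}\partial_{x_j}V/V$ and $\widetilde Q=Q+\zeta_ML_{a,b}V/V\le C_0+1$: the unbounded $\Lambda$ cancels between $Q$ and the Lyapunov inequality $L_{a,b}V\le(C_0-\Lambda)V$, leaving a zero-order coefficient bounded above by a constant, to which the standard maximum principle (cited in the paper from Stroock--Varadhan) applies and yields $w\le e^{(C_0+1)(s-t)}\max_x|\nabla\psi|^2/2$, i.e.\ the stated gradient bound. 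Your subtraction trick performs the cancellation only in the $V$-part of $\Phi$, not in the coefficient multiplying $\Phi$ itself, which is why it fails. Finally, your existence discussion (viscous regularization, interior Schauder on $\{\zeta_M>0\}$, ``smoothness extends across'' $\partial\,\mathrm{supp}\,\zeta_M$) is also too optimistic for a degenerate equation --- global smoothness with bounded derivatives is exactly what the paper imports from Oleinik and Stroock--Varadhan rather than proves --- but this is a citation issue, not the main defect.
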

\begin{proof}
The existence of a smooth bounded (with bounded derivatives) solution $f$
is well known (see \cite[Theorem 2]{Ol}, \cite[Theorem 3.2.4, Theorem 3.2.6]{Str}).
The maximum principle implies that $|f(x, t)|\le \max_x|\psi(x)|$.
Set $u=2^{-1}\sum_{k=1}^N|\partial_{x_k}f|^2$.
Differentiating the equation $\partial_tf+\zeta_ML_{a, b}f=0$ with respect to $x_k$ and
multiplying by $\partial_{x_k}f$, we obtain
\begin{multline*}
\partial_tu+\zeta_ML_{a, b}u+\zeta_{M}\langle \mathcal{B}\nabla f, \nabla f\rangle
+\langle \nabla\zeta_M, \nabla f\rangle \langle b, \nabla f\rangle
+\zeta_{M}\partial_{x_k}a^{ij}\partial^2_{x_ix_j}f\partial_{x_k}f+
\\
+a^{ij}\partial^2_{x_ix_j}f\partial_{x_k}f\partial_{x_k}\zeta_{M}-
\zeta_{M}a^{ij}\partial^2_{x_kx_j}f\partial^2_{x_kx_j}f=0.
\end{multline*}
Note that $\langle \mathcal{B}\nabla f, \nabla f\rangle\le 2\theta u$
and $\langle \nabla\zeta_M, \nabla f\rangle \langle b, \nabla f\rangle
\le 2|\nabla\zeta_M||b|u$.
Let us consider the expression
$$
\zeta_{M}\partial_{x_k}a^{ij}\partial^2_{x_ix_j}f\partial_{x_k}f
+a^{ij}\partial^2_{x_ix_j}f\partial_{x_k}f\partial_{x_k}\zeta_{M}-
\zeta_{M}a^{ij}\partial^2_{x_kx_j}f\partial^2_{x_kx_j}f.
$$
Recall that $A=\sigma_N^2$. We have
\begin{multline*}
\sum_{i, j, k}\partial_{x_k}a^{ij}\partial^2_{x_ix_j}f\partial_{x_k}f=
2\sum_{i, j, m, k}\partial_{x_k}\sigma^{im}\sigma^{mj}\partial^2_{x_ix_j}f\partial_{x_k}f\le
\\
\le2\sum_{i, m}\Bigl(\sum_k|\partial_{x_k}\sigma^{im}|^2\Bigr)^{1/2}
\Bigl(\sum_k|\partial_{x_k}f|^2\Bigr)^{1/2}\Bigl|\sum_j\sigma^{mj}\partial^2_{x_ix_j}f\Bigr|,
\end{multline*}
which is estimated by
$$
4u\sum_{i, m, k}|\partial_{x_k}\sigma^{im}|^2+2^{-1}\sum_{i,m}\Bigl|\sum_j\sigma^{mj}\partial^2_{x_ix_j}f\Bigr|^2.
$$
Note that
$$
\sum_{i,m}\Bigl|\sum_j\sigma^{mj}\partial^2_{x_ix_j}f\Bigr|^2=\sum_{i, j, k}a^{ij}\partial^2_{x_kx_j}f\partial^2_{x_kx_j}f.
$$
Applying the inequality $xy\le (4+4{\rm tr}A)^{-1}x^2+(1+{\rm tr}A)y^2$ we obtain
$$
a^{ij}\partial^2_{x_ix_j}f\partial_{x_k}f\partial_{x_k}\zeta_{M}\le
\frac{|\nabla\zeta_M|^2}{\zeta_M}(1+{\rm tr}A)+
(4+4{\rm tr}A)^{-1}\Bigl(a^{ij}\partial^2_{x_ix_j}f\Bigr)^2.
$$
Note that the following inequality is true:
$$
\Bigl(\sum_{i, j=1}^Na^{ij}\partial_{x_i}\partial_{x_j}f\Bigr)^2\le
\Bigl(\sum_{i=1}^Na^{ii}\Bigr)
\Bigl(\sum_{i, j, k}^Na^{ij}\partial_{x_i}\partial_{x_k}f\partial_{x_j}\partial_{x_k}f\Bigr).
$$
This follows by the inequality
$$
|{\rm tr}\, (AB)|^2\le {\rm tr}\, A \ {\rm tr}\, (AB^2)
$$
valid for symmetric matrices $A$ and $B$, where $A$ is nonnegative. The latter
is due to the Cauchy inequality applied to the inner product
$\langle X,Y\rangle ={\rm tr}\, (XY^{*})$ on the space of $N\times N$-matrices
and the matrices $X=A^{1/2}$, $Y=BA^{1/2}$,  for which
${\rm tr}\, (YY^{*})={\rm tr}\, (BA^{1/2}A^{1/2}B)={\rm tr}\, (AB^2)$.
Applying the above inequality it is easy to verify that
$$
\partial_{t}u+\zeta_ML_{a, b}u+Qu\ge 0,
$$
where
$$
Q=\frac{|\nabla\zeta_{M}|^2}{\zeta_M}(1+{\rm tr} A)+|\nabla\zeta_{M}||b|+
2\zeta_{M}\theta+4\zeta_{M}\sum_{i, j, k\le N}
\bigl|\partial_{x_k}\sigma^{ij}_N\bigr|^2.
$$
We have
$$
|\nabla\zeta_{M}(x)|\le 4\kappa(1+|x|^2)^{-1/2}\bigl|\eta'\bigl((1+|x|^2)^{\kappa}/M\bigr)\bigr|.
$$
Hence
$$
Q\le 4\kappa^2C(1+{\rm tr}\, A)+ 16\kappa C+
\zeta_M\Bigl(4\sum_{i, j, k\le N}
\bigl|\partial_{x_k}\sigma^{ij}_N\bigr|^2+2\theta+2\kappa(1+|x|^2)^{-1}|b|^2\Bigr).
$$
Let us choose $\kappa>0$ such that
$$
Q\le 1+\zeta_M\Bigl(4\sum_{i, j, k\le N}\bigl|\partial_{x_k}\sigma^{ij}_N\bigr|^2+2\theta+\delta(1+|x|^2)^{-1}|b|^2\Bigr).
$$
Let us set $u=wV$. Then $w$ satisfies the inequality
$$
\partial_{t}w+\zeta_{M}L_{a, \widetilde{b}}w+\widetilde{Q}w\ge 0,
$$
where
$$
\widetilde{b}^k=b^k+2\frac{a^{kj}\partial_{x_j}V}{V}, \quad
\widetilde{Q}=Q+\zeta_M\frac{L_{a, b}V}{V}.
$$
By our assumptions we have $\widetilde{Q}\le C_0+1$.
Since $u(x,s)=|\nabla f(x,s)|^2/2=|\nabla\psi(x)|^2/2$, we have
$$
w(x, s)=V(x)^{-1}|\nabla\psi(x)|^2/2\le |\nabla\psi(x)|^2/2.
$$
Applying the maximum principle (see \cite[Theorem 3.1.1]{Str}) we obtain
$$
\max_x|w(x, t)|\le e^{(C_0+1)(s-t)}\max_{x}|\nabla\psi(x)|^2/2,
$$
which completes the proof.
\end{proof}

We can now prove our theorem.

\begin{proof}
Assume that $\sigma^1=\sigma^1_tdt$ and $\sigma^2=\sigma^2_tdt$ belong to $\mathbb{P}_{\nu}$.
By our assumption about $\mathbb{P}_\nu$ we have
$\sigma=(\sigma^1+\sigma^2)/2\in\mathbb{P}_{\nu}$.
Let $d\in\mathbb{N}$, $\psi\in C^{\infty}_0(\mathbb{R}^d)$
and $|\nabla\psi(x)|+|\psi(x)|\le 1$ for all $x\in\mathbb{R}^d$.
For every $\varepsilon>0$ and every natural number $d$ we find a natural number $N\ge d$,
a~$C^{\infty}$-mapping $b=(b^k)_{k=1}^N\colon\,
\mathbb{R}^N\times [0, T_0]\to \mathbb{R}^N$, a function $\theta$ on~$\mathbb{R}^N$,
a function $V\in C^2(\mathbb{R}^N)$, $V\ge 1$,
and numbers $C_0\ge 0$ and $\delta>0$
such that (i) and (ii) in condition~(${\rm B}'$) are fulfilled.

Let a function $\eta\in C^{\infty}_0(\mathbb{R}^1)$ be such that
$\eta(x)=1$ if $|x|\le 1$ and $\eta(x)=0$ if $|x|>2$,
$0\le\eta\le 1$ and there exists a number $C>0$ such that  $|\eta'(x)|^2\eta^{-1}(x)\le C$
for every $x$.
Let $\kappa>0$ be as in Lemma~\ref{lem2.8}.
Set $\varphi_{K}(x)=\eta(|x|^2/K)$ and $\zeta_{M}(x)=\eta\bigl((1+|x|^2)^{\kappa}/M\bigr)$.

Let us fix a number $K>0$ and  find a number $M$ such that $\zeta_{M}(x)=1$ if $|x|^2<2K$.

Fix
$t\in \bigcap_{K} (J^{\sigma^1}_{\psi\varphi_K}\bigcap J^{\sigma^2}_{\psi\varphi_K})$.
Let $f$ be a smooth bounded solution to the finite-dimensional Cauchy problem
$$
\left\{\begin{array}{l}
\partial_tf+\zeta_M(x)\sum_{i, j=1}^{N}a^{ij}\partial_{x_i}\partial_{x_j}f
+\zeta_M(x)\sum_{i=1}^Nb^i\partial_{x_i}f=0 \quad \hbox{on $\mathbb{R}^N\times (0,t)$}, \\
f(t, x)=\psi(x).
\end{array}\right.
$$
Set $\mu=\sigma^1-\sigma^2$.
The measure $\mu$ solves the Cauchy problem (\ref{e1})
with zero initial condition.
Recall that $\zeta_M(x)=1$ if $\varphi_K(x)\ne 0$.
Therefore,
\begin{equation*}
\int_{\mathbb{R}^{\infty}}\psi\varphi_K
\,d\mu_t=\int_0^t\int_{\mathbb{R}^{\infty}}
\bigl[\varphi_K \langle B-b, \nabla_x f\rangle
+fL\varphi_K+2\langle A\nabla_xf, \nabla_{x}\varphi_K\rangle\bigr]\,d\mu_s\,ds.
\end{equation*}
Applying Lemma \ref{lem2.8} we have the estimate
$$
|f(x, s)|\le 1, \quad |\nabla_x f(x, s)|^2\le e^{(C_0+1)(T_0-s)}V(x)/2.
$$
Hence
\begin{equation*}
\int_{\mathbb{R}^{\infty}}\psi
\,d\mu_t\le 2\int_0^t\int_{\mathbb{R}^{\infty}}
\Bigl[|B-b|V^{1/2}e^{(C_0+1)(T_0-s)/2}+|L\varphi_K|+2|A\nabla\varphi_K|e^{(C_0+1)(T_0-s)/2}V^{1/2}\Bigr]\,d\sigma_s\,ds.
\end{equation*}
Letting $K\to +\infty$ we find that
$$
\int_{\mathbb{R}^{\infty}}\psi
\,d\mu_t\le 2\int_0^t\int_{\mathbb{R}^{\infty}}
|B-b|V^{1/2}e^{(C_0+1)(T_0-s)/2}\,d\sigma_s\,ds<2\varepsilon.
$$
Since $\varepsilon>0$ was arbitrary, we obtain
$$
\int_{\mathbb{R}^{\infty}}\psi\,d\mu_t\le 0.
$$
Replacing $\psi$ by $-\psi$ we arrive at the equality
$$
\int_{\mathbb{R}^{\infty}}\psi\,d\mu_t=0.
$$
Therefore,
$$
\int_{\mathbb{R}^{\infty}}
\psi\,d\sigma^1_t=\int_{\mathbb{R}^{\infty}}\psi\,d\sigma^2_t
$$
for almost every $t$. Thus, $\sigma^1=\sigma^2$.
\end{proof}

\begin{example}\label{ex2.9}
\rm (''Reaction diffusion equations'')
Let us return to the situation of Example~\ref{ex2.4}, but
now we assume that
there exists a sequence of smooth bounded functions $f_n(z, u,t)$ such that
$\lim\limits_{n\to\infty}f_n(z, u,t)=f(z, u,t)$ for every $u, t, z$ and
 $$
 |f_n(z, u,t)|\le C_1+C_1|u|^m,
 \quad
(f_n(z, u,t)-f_n(z, v,t))(u-v)\le C_2|u-v|^2,
$$
where $C_1$ and $C_2$ do not depend on $n$.
Assume also that $a^{ij}=(Se_i, e_j)_{2}$ for some symmetric nonnegative operator $S$
on $L^2((0, 1))$, which can be degenerate unlike in Example~\ref{ex2.4}.
Then there exists at most one probability solution $\mu$ of the Cauchy problem for
the Fokker--Planck--Kolmogorov equation $\partial_t\mu=L^{*}\mu$
such that
$$
\int_0^{T_0}\int_{L^2((0, 1))}\|u\|_{2m}^{m}\,\mu_t(du)\,dt<\infty.
$$
The same conclusion is true if  $A=(a^{ij})$ is a
nonconstant matrix satisfying
condition (${\rm A}')$ and there exists a constant $C_1$ such that
for every natural number $N$ and every $(x, t)\in\mathbb{R}^N\times[0, T_0]$
we have
$$
\sum_{i, j, k\le N}\bigl|\partial_{x_k}\sigma^{ij}_N(x, t)\bigr|^2\le C_1.
$$
\end{example}
\begin{proof}
Set $F^i(u,t)=\langle f(\,\cdot\,,u(\cdot),t), e_i\rangle_{2}$,
$F^i_n(u,t)=\langle f_n(\,\cdot\,,u(\cdot),t), e_i\rangle_{2}$,
$F_n(u,t)=(F^i_n(u,t))_{i=1}^\infty$,
and extend all these maps to all of $\mathbb{R}^\infty\times [0,T_0]$ by zero.
According to our assumptions and the dominated convergence theorem we have
$$
\lim_{n\to\infty}\int_0^{T_0}\int_{L^2((0, 1))}\|F(u,t)-F_n(u,t)\|_{l^2}\,\mu_t(du)\,dt=0.
$$
Let $P_Nu:=u_1e_1+\ldots+u_Ne_N$.
The above equality shows
that for each $\varepsilon>$ and $d\ge 1$ there exist numbers $n$ and $N>d$ such that
$$
\int_0^{T_0}\int_{L^2((0, 1))}\|F(u,t)-F_n(P_Nu,t)\|_{l^2}\,\mu_t(du)\,dt<\varepsilon.
$$
Note that the condition
$$
(f_n(z, u,t)-f_n(z, v,t))(u-v)\le C_2|u-v|^2
$$
implies that
$$
\sum_{i,j\le N}
\partial_{u_i}F^j_n(P_Nu,t)h_ih_j\le C_2|h|^2, \quad h=(h_i)\in \mathbb{R}^N.
$$
Hence Theorem \ref{th2} with $V\equiv 1$ implies uniqueness.
\end{proof}

Below for simplicity the integral of the product of an integrable function
$f_1$ and a bounded function $f_2$ is denoted by $(f_1,f_2)_2$.

\begin{example}\label{ex2.10}
\rm (``Stochastic Burgers equation'')
Suppose that $\{e_k\}$
is an eigenbasis of  the Laplacian
on $L^2[0, 1]$ with zero boundary condition, i.e., $D^2 e_k=-\lambda_k^2e_k$.
Set $B(u)(z)=D^2u(z)+D(u^2(z))$, that is,
$$
B^i(u)=-\lambda_i^2u_i-\langle u^2, De_i\rangle_{2}, \ u\in L^2[0, 1], \ u_i=\langle u,e_i\rangle_{2}.
$$
Assume that $a^{ij}=\langle Se_i, e_j\rangle_{2}$ for some symmetric nonnegative operator $S$
on $L^2[0, 1]$ with
finite trace (${\rm tr} S<\infty$).
Set
$$
L\varphi=\sum_{i, j=1}^{\infty}a^{i j}\partial_{e_i}\partial_{e_j}\varphi
+\sum_{i=1}^{\infty}B^i\partial_{e_i}\varphi.
$$
Let $H_0^1$ be the space of all absolutely continuous functions $u$ on $[0,1]$
such that $u(0)=u(1)=0$ and $\|u\|_{H_0^1}:=\|u'\|_2<\infty$.
Then there exists at most one probability solution $\mu$ of the Cauchy problem for
the Fokker--Planck--Kolmogorov equation $\partial_t\mu=L^{*}\mu$ such that
$$
\int_0^{T_0}\int_{L^2[0, 1]}\|u\|_{H_0^1}^2e^{\delta\|u\|_{2}^2}\,\mu_t(du)\,dt<\infty
$$
for some $\delta>0$.
\end{example}
\begin{proof}
We apply Example~\ref{ex2.7}(ii). Recall that the matrix $(a^{ij})$ has to satisfy the following condition
for some $\varepsilon_0>0$:
$$
\varepsilon_0\bigl(\langle A_Nx, x\rangle +|x|^2\bigr)\le \|x\|_{l^2_{\lambda}}^2,
\quad x\in\mathbb{R}^N.
$$
This is equivalent to
$$
\varepsilon_0\bigl(\langle Su, u\rangle_2+\|u\|^2_2\bigr)\le \|u\|_{H_0^1}^2,
$$
which is true for sufficiently small $\varepsilon_0$. We fix $\varepsilon_0\in (0, \delta)$.
Set $F^i(u):=\langle u^2, De_i\rangle_{2}$ for $u\in L^2$ and extend $F^i$ by zero
to all other $u=(u_k)$ in $\mathbb{R}^\infty$.
Let
$F(u)=(F^i(u))_{i=1}^\infty$,
$P_Nu:=u_1e_1+\ldots+u_Ne_N$,
$$
b^k(u_1, \ldots, u_N):=-\lambda_k^2u_k+F^k(P_Nu), \quad k\le N.
$$
Note that
$$
\|F(u)\|_{l^2}=\|(u^2)'\|_2=2\|uu'\|_2\le 2\|u\|_{H_0^1}^2.
$$
Hence
$$
\lim_{N\to\infty}\int_0^{T_0}\int_{L^2[0, 1]}
\|F(u)-F(P_Nu)\|_{l^2}e^{\delta\|u\|_{2}^2}\,\mu_t(du)\,dt=0
$$
It is easy to see that $|b^k(u)|\le C_1(N)+C_2(N)\|P_Nu\|_2^2$ and
$\langle F(P_Nu), P_Nu\rangle_{2}\le 0$.
Moreover, for every $\gamma\in(0, 1)$ we have the inequalities
$$
\sum_{i,k\le N} \partial_{u_i}F^k(P_Nu)h_ih_k
\le \|h\|_{l^2_{\lambda}}+(\gamma\|P_Nu\|_{H_0^1}^{2}+C_{\gamma})|h|^2,
\quad h=(h_i)\in\mathbb{R}^N.
$$
Set
$\theta(P_Nu)=\gamma\|P_Nu\|_{H_0^1}^{2}+C_{\gamma}$ and $C_0=C_{\gamma}+{\rm tr} S$
(we recall that ${\rm tr} S<\infty$). In order to apply
Example~\ref{ex2.7}(ii) we choose $\gamma<2^{-1}\delta(\varepsilon_0-\delta)$.
\end{proof}

\begin{example}\label{ex2.11}
\rm (``Mixed Burgers/reaction diffusion type equations'')
(i)
In the situation of the previous example we consider the operator $L$ with
the drift coefficient of the form
$$
B(u)(z)=D^2u(z)+D(u^2(z))-u^{2m+1}(z), \quad m\in\mathbb{N},
$$
that is,
$$
B^i(u)=-\lambda_i^2u_i-\langle u^2, De_i\rangle_{2}-\langle u^{2m+1}, e_i\rangle_{2}.
$$
Assume that $a^{ij}$ satisfies the assumptions in the previous example.
Then there exists at most one probability solution $\mu$ of the Cauchy problem for
the Fokker--Planck--Kolmogorov equation $\partial_t\mu=L^{*}\mu$ such that
$$
\int_0^{T_0}\int_{L^2[0, 1]}\bigl[\|u\|_{4m+2}^{2m+1}
+\|u\|_{H_0^1}^2\bigr]e^{\delta\|u\|_{2}^2}\,\mu_t(du)\,dt<\infty
$$
for some $\delta>0$.

(ii)
In the situation of Example~\ref{ex2.10} we consider the operator $L$ with
the drift coefficient of the form
$$
B(u)(z)=D^2u(z)+D(u^{m}(z))-u^{2l+1}(z), \quad 2\le m\le l+2, \quad m, l\in\mathbb{N}
$$
that is,
$$
B^i(u)=-\lambda_i^2u_i-\langle u^{m}, De_i\rangle_{2}-\langle u^{2l+1}, e_i\rangle_{2}.
$$
Assume also that $a^{ij}=0$ if $i\ne j$ and that
$
\sum_{i=1}^\infty a^{ii}<\infty.
$
Then there exists at most one probability solution $\mu$ of the Cauchy problem for
the Fokker--Planck--Kolmogorov equation $\partial_t\mu=L^{*}\mu$ such that
$$
\int_0^{T_0}\int_{L^2((0, 1))}
\bigl[\|u\|_{4l+2}^{2l+1}+\|u^{m}\|_{H_0^1}\bigr]
\exp \Bigl(\kappa'\|u\|_{2m-2}^{2m-2}\Bigr)\,\mu_t(du)\,dt<\infty
$$
for some $\kappa'>0$.
This partially improves the results in \cite{GR}.
\end{example}
\begin{proof}
(i) We apply Example~\ref{ex2.7}(ii). Note that as in the above example
the matrix $(a^{ij})$ satisfies all conditions in Example~\ref{ex2.7}(ii).
Let $\psi_M\in C^{\infty}(\mathbb{R}^1)$, $\psi(s)=-\psi(-s)$,
$0\le \psi'\le 1$, $\psi_M(s)=s$ if $|s|\le M-1$ and
$\psi_M(s)=M$ if $s>M+1$.
Set
$$
F^i(u):=-\langle u^2, De_i\rangle_{2}-\langle u^{2m+1}, e_i\rangle_{2}, \quad
F^i_M(u):=-\langle u^2, De_i\rangle_{2}-\langle \psi_M(u)^{2m+1}, e_i\rangle_{2},
$$
$$
P_Nu:=u_1e_1+\ldots+u_Ne_N,
\quad
b^k(u_1, \ldots, u_N):=-\lambda_k^2u_k+F_M^k(P_Nu).
$$
As above, we define all these functions by zero if $u$ is not in $L^2[0,1]$.
Note that $\|F(u)\|_{l^2}\le 2\|u\|_{H_0^1}^2+\|u\|_{4m+2}^{2m+1}$.
and the same is true for $F_M(u)$ in place of $F(u)$.
Hence
$$
\lim\limits_{N\to\infty}
\Bigl(\lim\limits_{M\to\infty}
\int_0^{T_0}\int_{L^2[0, 1]}
\|F(u)-F_M(P_Nu)\|_{l^2}e^{\delta\|u\|_{2}^2}\,\mu_t(du)\,dt\Bigr)=0.
$$
It is easy to see that $|b^k(u)|\le C_1(N)+C_2(N)\|P_Nu\|_2^2$.
Recall that $\psi_M'\ge 0$ and $\psi_M(s)=-\psi_M(-s)$. Hence
$\langle F_M(P_Nu), P_Nu\rangle_{2}\le 0$.
For every $\gamma\in(0, 1)$ we have
$$
\sum_{i,k\le N}
\partial_{u_i}F^k_M(P_Nu)h_ih_k\le \|h\|_{l^2_{\lambda}}+(\gamma\|P_Nu\|_{H_0^1}^{2}+C_{\gamma})|h|^2,
\quad h=(h_i)\in \mathbb{R}^N.
$$
Set $\theta(P_Nu)=\gamma\|P_Nu\|_{H_0^1}^{2}+C_{\gamma}$ and $C_0=C_{\gamma}+{\rm tr} S$
(we recall that ${\rm tr} S<\infty$). In order to apply
Example~\ref{ex2.7}(ii) we choose $\gamma<2^{-1}\delta(\varepsilon_0-\delta)$.

(ii)
We check the condition (${\rm B'}$).
Set $F^k(u)=-\langle u^m, De_k \rangle_2-\langle u^{2l+1}, e_k \rangle_2$,
$P_Nu=u_1e_1+\ldots+u_Ne_N$ and
$$
b^k(u_1, u_2, \ldots, u_N)=-\lambda_k^2u_k+F^k(P_Nu).
$$
For each $N\ge 1$ and $p\ge 1$ there exist positive numbers $C_1(N, p)$ and $C_2(N, p)$
such that
$$
C_1\|P_Nu\|_2^{2p}\le \|P_Nu\|_{L^p}^p\le C_2\|P_Nu\|_2^{2p}.
$$
Hence there exists a number $C_3(N)$ such that
$$
|b(P_Nu)|(1+\|P_Nu\|_2^2)^{-1}\le
C_3(N)+C_3(N)\Bigl(\int_0^1|P_Nu(z)|^{m-2}\,dz+\int_0^1|P_Nu(z)|^{2l-1}\,dz\Bigr).
$$
It is easy to see that for every $\gamma\in(0, 1)$ there exists a number $C_{\gamma}>0$
(independent of~$N$) such that
$$
\sum_{i,k\le N}
\partial_{u_i}F^k(u^N)h_ih_k\le \gamma\|h\|_{l^2_{\lambda}}
+(\gamma\|(P_Nu)^{m-1}\|_{H_0^1}^{2}+C_{\gamma})|h|^2,
\quad h=(h_i)\in\mathbb{R}^N.
$$
Set $V(u)=\exp(\kappa\|u\|_{2m-2}^{2m-2})$, where $0<\kappa<\kappa'$
(the number $\kappa'$ comes from our assumptions).
Using the inequalities $\sum_{i=1}^{\infty}a^{ii}u_i^2\le c_0\sum_{i=1}^{\infty}$ for some $c_0>0$
and all $u\in L^2([0, 1])$, $\sum_{i=1}^{\infty}a^{ii}<\infty$, $m\le l+2$ and
choosing a sufficiently small number $\kappa$, we obtain
$$
L_{a, b}V(P_Nu)=\Bigl(C(m)-2^{-1}\kappa\|(P_Nu)^{m-1}\|_{H_0^1}^{2}
-2^{-1}\kappa\|P_Nu\|_{2m+2l-2}^{2m+2l-2}\Bigr)V(P_Nu).
$$
for some number $C(m)>0$ which does not depend on $N$.
Note that $m-2\le 2m+2l-2$ and $2l-1\le 2m+2l-2$. Choosing $\gamma<\kappa/2$ we have
$$
L_{a, b}V(P_Nu)\le (C_0-\gamma\|(P_Nu)^{m-1}\|_{H_0^1}^{2}-\delta|b(P_Nu)|(1+\|P_Nu\|_{2}^2)^{-1})V(P_Nu)
$$
for some $C_0>0$ and $\delta>0$. Note that $C_0$ does not depend on $N$ and we can omit the term
$e^{C_0(T-t)/2}$ in the condition (${\rm B'}$)(i).
Finally we note that $\|F(u)\|_{l^2}\le \|u^{m}\|_{H_0^1}+\|u\|_{4l+2}^{2l+1}$ and
$$
\lim_{N\to\infty}\int_0^{T_0}\int_{L^2((0, 1))}
\|F(u)-F(P_Nu)\|_{l^2}\exp\Bigl(\kappa\|u\|_{2m-2}^{2m-2}\Bigr)\,\mu_t(du)\,dt=0.
$$
Hence Theorem \ref{th2} implies uniqueness.
\end{proof}

\begin{example}\label{ex2.12}
\rm (``Stochastic $2d$-Navier--Stokes equation'')
Let us consider the space
$V_2$ of $\mathbb{R}^2$-valued mappings $u=(u^1, u^2)$
such that $u^j\in H_0^{2,1}(D)$ and ${\rm div}\, u=0$,
where $D\subset\mathbb{R}^2$ is a bounded domain with smooth
boundary. The space $V_2$ is equipped with its natural Hilbert norm
$\|u\|_{V_2}$ defined by
$$
\|u\|_{V_2}^2:=\sum_{j=1}^2 \|\nabla_z u^j\|_{2}^2.
$$
Let $H$ be the closure of $V_2$ in $L^2(D,\mathbb{R}^2)$
and let $P_H$ denote the orthogonal projector on $H$ in $L^2(D,\mathbb{R}^2)$.
It is known (see \cite{Lad}) that there exists
an orthonormal basis $\{\eta_n\}$ in~$H$ formed
by eigenfunctions of $\Delta$ with eigenvalues $-\lambda_n^2$ such that $\eta_n\in V_2$.
Recall that $\langle P_H w,\eta_n\rangle_2=\langle w,\eta_n\rangle_2$ for any
$w\in L^2(D,\mathbb{R}^d)$.
Set
$$
B^n(u,t) =
\langle u,\Delta \eta_n\rangle_2
-\sum_{j=1}^2 \langle P_H u^j\partial_{z_j} u, \eta_n\rangle_{2}=
\langle u,\Delta \eta_n\rangle_2
-\sum_{j=1}^2 \langle\partial_{z_j}u, u^j\eta_n\rangle_{2}
$$
whenever $u\in V_2$ and $B^n(u,t) =0$ otherwise.
These functions are continuous
on balls in~$V_2$ with respect to the topology of $L^2(D,\mathbb{R}^2)$,
which easily follows from the compactness of the Sobolev embedding $H^{2,1}(D)\to L^2(D)$.
Consider the operator
$$
L\varphi(u,t)=
\sum_{i, j}^\infty a^{i j} \partial_{\eta_i}\partial_{\eta_j} \varphi (u,t)
+\sum_{n=1}^\infty B^n(u,t)\partial_{\eta_n}\varphi (u,t).
$$
Assume that $a^{ij}=\langle S\eta_i, \eta_j\rangle_{2}$
for some symmetric nonnegative bounded operator $S$ on~$H$.
Suppose also that $\sum_ia^{ii}\lambda_i^2<\infty$.
Then there exists at most one probability solution $\mu$ of the Cauchy problem
for the Fokker--Planck--Kolmogorov equation $\partial_t\mu=L^{*}\mu$ such that
for some $\delta>0$
$$
\int_0^{T_0}\int_{H}\bigl(1+\|\Delta u\|_{2}^2\bigr)
e^{\delta\|u\|_{V_2}^2}\,\mu_t(du)\,dt<\infty,
$$
where we set $\|\Delta u\|_{2}=\infty$ if $u\not\in H^{2,2}(D)$.
\end{example}
\begin{proof}
We apply Example~\ref{ex2.7}(iii). Recall that
the matrix $(a^{ij})$ has to satisfy the following condition
for some $\varepsilon_0>0$
$$
\varepsilon_0\sum_{i,j\le N}a^{ij}\lambda^2_i\lambda^2_jx_ix_j+\varepsilon_0|x|^2\le \sum_{i\le N}
\lambda_i^4x_i^2
$$
that is equvivalent (if we take $u=\sum_{i=1}^N\lambda_ix_ie_i$)
to the estimate
$$
\varepsilon_0\bigl(\langle Su, u\rangle_2+\|u\|_2^2\bigr)\le \|u\|^2_2,
$$
which is true for sufficiently small $\varepsilon_0$.
Set
$$
F^n(u)=-\sum_{j=1}^2 \langle \partial_{z_j}u, u^j\eta_n\rangle_{2},
\quad u\in V_2.
$$
Note that $|F^n(u)|\le C_1(n)+C_2(n)\|u\|^2_{2}$, since
$F^n(u)=\sum_{j=1,2} \langle u,u_j\partial_{z_j}\eta_n\rangle_2$
due to the condinition that ${\rm div}\, u =0$.
It is well-known that there exists a constant $C_1>0$ such that
for every function $g\in H^{2, 1}_0(D)\cap H^{2, 2}(D)$ we have
$$
\|g\|_{2, 2}\le C_1\bigl(\|\Delta g\|_{2}+\|g\|_{2}\bigr).
$$
Moreover, for every $g\in H^{2, 2}(D)$, every $r\ge 1$ and some constant $C_2>0$
we have
$$
\|g\|_{r}\le C_2\|g\|_{2, 1}.
$$
Hence
\begin{align*}
\|F(u)\|_{l^2}^2 &\le \int_{D}|\nabla_z u(z)|^2|u(z)|^2\,dz\le
\Bigl(\int_{D}|\nabla_z u(z)|^4\,dz\Bigr)^{1/2}\Bigl(\int_{D}|u(z)|^4\,dz\Bigr)^{1/2}\le
\\
&\le
C_1^2C_2^4(1+\|\Delta u\|_{2}^2)\|u\|_{V_2}^2.
\end{align*}
Let $P_Nu=u_1\eta_1+\ldots+u_N\eta_N$.
We have
$$
\lim_{N\to\infty}\int_0^{T_0}\int_{L^2((0, 1))}\|F(u)-F(P_Nu)\|_{l^2}e^{\delta\|u\|_{V_2}^2/2}\,\mu_t(du)\,dt=0.
$$
It is known (see, e.g., \cite[Proposition 6.3]{D}) that in the considered case $d=2$ we have
the inequality
$$
\langle F(P_Nu), \Delta P_Nu\rangle_{2}=0
$$
which gives the condition
$\langle f(x, t), x\rangle_{l^2_{\lambda}}\le 0$ in Example~\ref{ex2.7}(iii).
In addition, for every $\gamma\in(0, 1)$
$$
\sum_{i,j\le N} \partial_{u_i}F^j(P_Nu)h_ih_j\le \bigl(C_\gamma
+\gamma\|\Delta P_Nu\|_{l^2}^2\bigr)|h|^2+\|h\|^2_{l^2_{\lambda}},
\quad h=(h_i).
$$
Set $\theta(P_Nu)=C_\gamma+\gamma\|\Delta P_Nu\|_{l^2}^2$
and $C_0=C_{\gamma}+\sum_{i=1}^{\infty}a^{ii}\lambda_i^2$
(we recall that $\sum_{i=1}^{\infty}a^{ii}\lambda_i^2<\infty$). In order to apply
Example~\ref{ex2.7}(iii) we choose $\gamma<2^{-1}\delta(\varepsilon_0-\delta)$.
In Example~\ref{ex3.5} we consider a more general equation.
\end{proof}

It is worth noting that the last example applies to degenerate coefficients $A$, in particular,
to $A$ identically zero, which gives uniqueness for the so-called continuity equation
corresponding to $2d$-Navier--Stokes equation.

In the next section we show that the considered classes of uniqueness are not empty.

\section{Existence of solutions}

First we would like to mention that if the stochastic equation associated
to our Fokker--Planck--Kolmogorov equation has a solution in the sense
of Stroock--Varadhan's martingale problem, then one immediately gets
a solution to the FPK-equation. But uniqueness of solutions for a martingale problem
does not imply uniqueness for the corresponding FPK-equation.

In this section we purely analytically prove
the following existence result generalizing a result from \cite{BDPR08}
(where only a sketch of the proof of a weaker result was given).

Let $\{e_n\}$ be an orthonormal basis in $l^2$.
The
linear span of $e_1,\ldots,e_n$ is denoted by~$H_n$.

Let $T_0>0$ and let $a^{ij}\colon\, \mathbb{R}^\infty\times [0,T_0]\to \mathbb{R}^1$ and
$B^i\colon\,  \mathbb{R}^\infty\times [0,T_0]\to \mathbb{R}^1$ be Borel
functions. Suppose that the matrices $(a^{ij})_{i,j\le n}$ are
symmetric nonnegative for all~$n$. Set
$$
L\varphi(x,t):=\sum_{i,j=1}^n
a^{ij}(x,t)\partial_{e_i}\partial_{e_j}\varphi(x,t)+ \sum_{i=1}^n
B^{i}(x,t)\partial_{e_i}\varphi(x,t), \ (x,t)\in \mathbb{R}^\infty \times [0,T_0]
$$
for functions $\varphi$ that are smooth functions of the variables
$x_1,\ldots,x_n,t$.

Let $B_n:=(B^1,\ldots,B^n)$ and $P_nx=(x_1,\ldots,x_n)$.

A Borel function  $\Theta\colon\,  \mathbb{R}^\infty\to [0,+\infty]$ such that
the sublevel sets $\{\Theta\le R\}$ are compact is called a compact function.
For example, one can take any numbers $\alpha_i>0$  and set
$\Theta(x)=\sum_{i=1}^\infty \alpha_i^2x_i^2.$

\begin{theorem}\label{t4.1}
Suppose that there exists
a compact function $\Theta\colon\, \mathbb{R}^\infty\to [0,+\infty]$,
finite on each $H_n$ and such that
 the functions $a^{ij}$ and $B^i$ are continuous in~$x$ on all the sets
$\{\Theta\le R\}$,
and there exist   numbers $M_0,C_0\ge 0$ and a Borel
function $V\colon\, \mathbb{R}^\infty\to [1,+\infty]$ whose
 sublevel sets $\{V\le R\}$ are compact and whose restrictions to $H_n$ are of class $C^2$
  and  such that for all $x\in H_n$, $n\ge 1$, one has
\begin{equation}\label{e4.1}
\sum_{i,j=1}^n a^{ij}(x,t)\partial_{e_i}V(x)\partial_{e_j}V(x)\le M_0 V(x)^2,
\quad
LV(x,t)\le C_0 V(x)-\Theta(x) .
\end{equation}
Assume also that there exist constants $C_i\ge 0$ and $k_i\ge 0$
such that for all $i$ and $j\le i$ one has
\begin{equation}\label{e4.2}
|a^{ij}(x,t)|+|B^i(x,t)|\le C_i V(x)^{k_i}(1+\delta(\Theta(x))\Theta(x)),
\ (x,t)\in \mathbb{R}^\infty \times [0,T_0],
\end{equation}
where $\delta$ is a bounded nonnegative Borel function on $[0,+\infty)$
with $\lim\limits_{s\to \infty}\delta(s)=0$.
Then, for every Borel probability measure $\nu$ on $\mathbb{R}^\infty$ such that
$W_k:=\sup_n \|V^k\circ P_n\|_{L^1(\nu)}<\infty$ for all $k\in \mathbb{N}$,
the Cauchy problem {\rm(\ref{e1})}  with initial distribution $\nu$ has a
solution of the form $\mu=\mu_t\, dt$ with Borel probability measures
$\mu_t$ on~$\mathbb{R}^\infty$ such that for all $t\in [0,T_0]$
\begin{equation}\label{e4.3}
\int_{\mathbb{R}^\infty} V^k\, d\mu_t
+k \int_0^t \int_{\mathbb{R}^\infty} V^{k-1} \Theta\, d\mu_s\, ds
\le N_k W_k
\quad \forall \, k\in\mathbb{N},
\end{equation}
where
$N_k:=M_ke^{M_k}+1, \quad M_k=k(C_0+(k-1)M_0)$.
In particular, $\mu_t(V<\infty)=1$ for all $t$ and $\mu_t(\Theta<\infty)=1$ for almost all $t$.
\end{theorem}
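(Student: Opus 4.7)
My plan is to construct $\mu$ via Galerkin-type finite-dimensional approximation, extract moment estimates from the Lyapunov hypotheses in (\ref{e4.1}), and then pass to the limit, with the growth bound (\ref{e4.2}) supplying the uniform integrability needed at the last step.

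First, for each $n\in\mathbb{N}$ I would introduce the truncated coefficients $a^{ij}_n(x,t):=a^{ij}(P_n x,t)$ and $B^i_n(x,t):=B^i(P_n x,t)$ for $i,j\le n$, regarded as functions on $H_n\cong\mathbb{R}^n$, together with the projected initial measure $\nu_n:=\nu\circ P_n^{-1}$. A finite-dimensional existence result for FPK equations of the type developed in \cite{BKR}, \cite{BKR09} (which requires only a structural Lyapunov bound on top of continuity of the coefficients on the compacts $\{\Theta\le R\}\cap H_n$) then yields a probability solution $\mu^{(n)}=\mu_t^{(n)}\,dt$ of the finite-dimensional Cauchy problem on $\mathbb{R}^n$; I lift $\mu^{(n)}$ to a measure on $\mathbb{R}^\infty$ supported on $H_n$.

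Next I would derive (\ref{e4.3}) uniformly in $n$. On $H_n$ the chain rule combined with both parts of (\ref{e4.1}) gives
$$
LV^k \;=\; kV^{k-1}LV + k(k-1)V^{k-2}\sum_{i,j\le n}a^{ij}\partial_{e_i}V\,\partial_{e_j}V \;\le\; M_k V^k - kV^{k-1}\Theta.
$$
Testing the weak equation for $\mu^{(n)}$ against $V^k$ multiplied by a smooth spatial cutoff, then removing the cutoff via monotone convergence, and applying Gronwall to $g(t):=\int V^k\,d\mu_t^{(n)}$ produces $g(t)\le W_k e^{M_k t}$ and, after rearrangement, the full two-term estimate (\ref{e4.3}) with constant independent of $n$.

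Tightness of $\{\mu_t^{(n)}\}$ then follows from the uniform bound $\int V\,d\mu_t^{(n)}\le N_1 W_1$, since $V$ has compact sublevel sets in the Polish space $\mathbb{R}^\infty$. Combined with equicontinuity in $t$ of $t\mapsto\int\varphi\,d\mu_t^{(n)}$ for cylindrical $\varphi$ (a consequence of the weak equation and the moment bounds), a diagonal argument extracts a subsequential weak limit $\mu_t$, for which (\ref{e4.3}) persists by Fatou and weak lower semicontinuity. The delicate point is to verify that $\mu=\mu_t\,dt$ satisfies (\ref{e1}): for $\varphi\in C_0^{2,1}(\mathbb{R}^m\times(0,T_0))$ and $n\ge m$, the weak formulation only involves $a^{ij}\partial_{x_i}\partial_{x_j}\varphi$ and $B^i\partial_{x_i}\varphi$ for $i,j\le m$. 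Splitting the spatial integration at a level set $\{\Theta\le R\}$, continuity of the coefficients on this compact together with weak convergence handle the bulk, while on $\{\Theta>R\}$ the growth bound (\ref{e4.2}) controls the tail: because $\delta(s)\to 0$ as $s\to\infty$, the factor $\delta(\Theta)\Theta$ combined with the uniform bound on $\int V^{k_i-1}\Theta\,d\mu^{(n)}$ coming from (\ref{e4.3}) supplies uniform integrability, so letting $R\to\infty$ after $n\to\infty$ closes the identification. This compactness/uniform-integrability step is the main obstacle of the argument; the remaining assertions $\mu_t(V<\infty)=1$ for every $t$ and $\mu_t(\Theta<\infty)=1$ for almost every $t$ then follow immediately from (\ref{e4.3}) with $k=1$.
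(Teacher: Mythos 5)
Your proposal follows essentially the same route as the paper's proof: finite-dimensional (Galerkin) approximations with projected coefficients and initial data, uniform moment estimates obtained from the Lyapunov computation for $V^k$, tightness from the compact sublevel sets plus a careful-in-$t$ selection argument, and identification of the limit by truncating the coefficients and using $\delta(\Theta(x))\Theta(x)$ together with the moment bounds to get uniform integrability of the tails. The only details the paper makes explicit that you gloss over are the device (Dugundji's linear extension theorem) needed to turn continuity of the truncated integrands on the compact sets $\{\Theta\le R\}$ into globally continuous bounded functions to which weak convergence of $\mu_{t,n}\,dt$ applies, and the fact that the tail estimate uses (\ref{e4.3}) at order $k_i+1$, i.e. the bound on $\int V^{k_i}\Theta$ rather than $\int V^{k_i-1}\Theta$.
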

\begin{proof}
For every fixed $n$ let $a_n^{ij}$ denote the restriction of
$a^{ij}$ to $H_n\times (0,T_0)$ and set
$A_n:=(a^{ij}_n)_{i,j\le n}$. Denote by $\nu_{n}$ the
projection of $\nu$ on~$H_n$. We show that there exist Borel
probability measures $\mu_{t,n}$ on $H_n$ such that the measure
$\mu_n:=\mu_{t,n}\, dt$ solves the Cauchy problem with
coefficients $A_n$ and $B_n$ on $H_n\times (0,T_0)$ and initial
distribution~$\nu_{n}$. To this end we consider the Lyapunov
function $V_m(x)=V(x)^{m}$ on $H_n$, where $m\ge 1$.
Letting  $M_m:=m(C_0+(m-1)M_0)$,
 we obtain
\begin{align*}
LV_m &=mV^{m-1}\Bigl(LV+(m-1)V^{-1} \sum_{i,j=1}^n a^{ii}\partial_{e_i}V\partial_{e_j}V\Bigr)
\le
mV^{m-1}(C_0V-\Theta+ (m-1)M_0V)
\\
&
\le
 M_mV^{m}-m V^{m-1}\Theta .
\end{align*}
Since the function $V_m$ is $\nu_{n}$-integrable, we can apply
the existence result from \cite{BDR08} and obtain the
desired probability measures $\mu_{t,n}$ on $H_n$ such that
the function
$$
t\mapsto \int_{H_n} \zeta(x)\, \mu_{t,n}(dx)
$$
is continuous on $[0,T_0)$ for every $\zeta\in C_0^\infty(H_n)$.
 Moreover, by \cite[Lemma~1]{BDPR08} (see also \cite[Lemma~2.2]{BDR08}), for each $m\ge 1$ and
 $$
 N_m:=M_me^{M_m}+1, \quad M_m=m(C_0+(m-1)M_0)
 $$
 the following estimate
holds for almost all $t\in (0,T_0)$:
\begin{multline}\label{e4.4}
\int_{H_n} V_m(x)\, \mu_{t,n}(dx)
+ m\int_0^t \int_{H_n} V_{m-1}(x)\Theta(x)\, \mu_{s,n}(dx)\, ds
\\
\le N_m\int_{H_n} V_m(x)\, \nu_{n}(dx)\le N_m+N_mW_m.
\end{multline}
 Therefore, by Fatou's theorem and the
above stated continuity of $t\mapsto \mu_{t,n}$ it follows that
(\ref{e4.4}) holds for all $t\in [0,T_0)$. Indeed, we replace $V_m$ and $\Theta V_{m-1}$
in the left-hand side by $\min(k,V_m)$ and
$\min(k,\Theta V_{m-1})$, obtain the desired estimate for all $t\in [0,T_0)$ keeping $k$
fixed and then let $k\to\infty$.

Suppose now that $\zeta\in C_0^\infty (\mathbb{R}^d)$. Let us identify $H_n$
with $\mathbb{R}^n$.
If $n\ge d$, then $\zeta$ regarded as a function on $\mathbb{R}^n$ belongs
to the class $C_b^\infty(\mathbb{R}^n)$.
Let $m=\max(k_1,\ldots,k_d)$.
Then we have the estimate
\begin{equation}\label{e4.5}
|L\zeta(x,t)|
\le K+KV_m(x)+K V_m(x)
\delta(\Theta(x))\Theta(x), \ (x,t)\in \mathbb{R}^n\times [0,T_0],
\end{equation}
where $K$ is some number which depends on~$\zeta$
(but is independent of $n$ since $\zeta$ is a function of $x_1,\ldots,x_d$).
Therefore, by approximation, inequality (\ref{e4.4}) and Lebesgue's dominated convergence
theorem we have
\begin{equation}\label{e4.6}
\int_{H_n} \zeta(x)\, \mu_{t,n}(dx)
=\int_0^t \int_{H_n} L\zeta(x,s)\, \mu_{s,n}(dx)\, ds
+\int_{H_n}\zeta(x)\, \nu_{n}(dx),
\end{equation}
because according to \cite{BDR08}, this identity holds for all
$\zeta\in C_0^\infty(\mathbb{R}^n)$, hence in our situation it remains valid also
for all~$\zeta\in C_b^\infty(\mathbb{R}^n)$.
Letting
$$
\varphi_n(t):=\int_{H_n} \zeta(x)\, \mu_{t,n}(dx), \ t\in [0,T_0],
$$
we see from (\ref{e4.4}), (\ref{e4.6})
that the function $\varphi_n$ is Lipschitzian (one can show that it
is everywhere differentiable in $(0,T_0)$) and (\ref{e4.5}) yields that
$$
|\varphi_n'(t)|
\le
\int_{H_n} |L\zeta(x,t)|\, \mu_{t,n}(dx)\le
K_\zeta \int_{H_n} [1+V_{m-1}(x)\Theta(x)]\, \mu_{t,n}(dx)
$$
with some number $K_\zeta$ that does not depend on~$n$ (but only on $\zeta$).
Therefore, by (\ref{e4.4})
the functions $\varphi_n$ possess uniformly bounded variations,
hence there is a subsequence in $\{\varphi_n\}$ convergent pointwise on $[0,T_0]$.
We may assume that this is true for the whole sequence.
Moreover, we can do this in a such a way that this pointwise convergence
holds for every function $\zeta$ from a fixed countable family $\mathcal F$
with the following property: the weak convergence of a uniformly
tight sequence of probability measures on $\mathbb{R}^\infty$ follows from convergence
of their integrals of every function in $\mathcal F$.

It follows from (\ref{e4.4}) and the compactness of the sets $\{V_m\le R\}$
  and  $\{\Theta\le R\}$ that,
for every fixed $t\in (0,T_0)$,
the sequence of measures $\mu_{t,n}$ is uniformly tight on $\mathbb{R}^\infty$
(see \cite[Example~8.6.5]{mera}).
  Hence we can find a subsequence, denoted
for simplicity by the same indices~$n$, such that $\{\mu_{t,n}\}$ converges weakly
on $\mathbb{R}^\infty$ for every rational~$t\in (0,T_0)$.
However, since we have ensured
convergence  of $\varphi_n(t)$ at every $t\in [0,T_0]$ for every $\zeta\in \mathcal F$,
we see  that $\{\mu_{t,n}\}$ converges weakly  for every~$t\in [0,T_0]$.

Estimate (\ref{e4.3}) follows from (\ref{e4.4}) taking into account
that $V\ge 1$ and $\Theta\ge 0$ are lower semicontinuous, hence
$V^k$ and $V^{k-1}\Theta$ are lower continuous.

 The family of measures $\mu_t$ obtained in this way
is the desired solution. Indeed, let us fix $\zeta\in C_0^\infty
(\mathbb{R}^d)$. We have to show that the integrals of
$L\zeta(x,t)$ over $\mathbb{R}^\infty\times [0,T]$, $T<T_0$, with
respect to $\mu_{n}$ converge to the integral with respect
to~$\mu=\mu_t\, dt$. This amounts to establishing such convergence
for all functions $f=\partial_{x_i}\zeta B^i$ and
$f=a^{ij}\partial_{x_j}\partial_{x_i}\zeta$. Suppose we are able
to show this for the functions $f_N=\max(\min(f,N),-N)$. Then
(\ref{e4.2}) and (\ref{e4.4}) enable us to extend the same to the
original function~$f$, because for every $\varepsilon>0$ these
estimates give a number $N$ such that the integral of $|f|
I_{|f|>N}$ with respect to $\mu_{t,n}\, dt$ is less
than~$\varepsilon$. Indeed, it suffices to show that the integral
of $G:=V^{k}(1+\delta(\Theta)\Theta)$ over the set $\{G\ge N\}$
with respect to $\mu_{t,n}\, dt$ does not exceed $\varepsilon$ for $N$ sufficiently large.
Take $n_1$ such that
$1/n_1+\delta(s)< c\varepsilon$ for all $s\ge n_1$,
where $c>0$ is so small that $cN_{k+1}W_{k+1}< 1/2$.
We may assume that $\delta\le 1$.
We have
$$
\int_0^{T_0}\int_{\{\Theta\ge n_1\}} G\, d\mu_{t,n}\, dt
=  \int_0^{T_0}\int_{\{\Theta\ge n_1\}} (\Theta^{-1}+\delta(\Theta))
V^{k}\Theta\, d\mu_{t,n}\, dt
\le
c\varepsilon \int_0^{T_0}\int_{H_n} V^{k}\Theta\, d\mu_{t,n}\, dt
\le \varepsilon/2.
$$
For any $N\ge n_1$ and $t<T_0$ we have
$$
\int_{\{G\ge N, \Theta\le n_1\}} G\, d\mu_{t,n}
\le
(1+n_1) \int_{\{V^k\ge N/(1+n_1)\}} V^k\, d\mu_{t,n}
\le
N^{-1}(1+n_1)^2 N_{k}W_{k},
$$
which can be made smaller than $\varepsilon/2$
 uniformly in $t<T_0$
for all $N$ sufficiently large.

Thus, it remains to justify the desired convergence in the case of $f_N$, which will be now
denoted by~$f$. We recall that the restriction of such a function $f$ to every
set $\{\Theta\le R\}\times [0,T_0]$ is continuous in the first variable. Dividing by $N$ we assume that
$|f|\le 1$.
If $f$ were continuous in $x$ on the whole space, this would follow at once from
the weak convergence of $\mu_{t,n}$ for every fixed~$t$. Our situation reduces to this one
in the standard way: given~$\varepsilon>0$, we find $R$ so large that the set
$\{\Theta\le R\}\times [0,T_0]$ has measure less than $\varepsilon$ with respect to
all measures $\mu_{t,n}\, dt$ and $\mu_t\, dt$.
By our assumption the set $\Omega=\{\Theta\le R\}$ is compact in $\mathbb{R}^\infty$.
The mapping $t\mapsto f(\,\cdot\, ,t)$ from $[0,T_0]$ to $C(\Omega)$ is Borel measurable.
By Dugundji's theorem (see \cite[Chapter~III, Section~7]{BP}), there is a linear extension operator
$E\colon\, C(\Omega)\to C_b(\mathbb{R}^\infty)$ such that $E\varphi(x)=\varphi(x)$
for all $\varphi\in C(\Omega)$, $x\in \Omega$ and $\|E\varphi\|_\infty=\|\varphi\|_\infty$.
Letting $g(x,t)=Ef(\,\cdot\, ,t)(x)$, we obtain a Borel function (since it is Borel measurable
in $t$ and continuous in~$x$, see \cite[Lemma~6.4.6]{mera}) such that $|g|\le 1$ and $g(t,x)=f(t,x)$ for all $x\in\Omega$.
The integral of $g$ with respect to $\mu_{t,n}\, dt$ converges to the integral of $g$ with respect to $\mu_t\, dt$
and the integrals of $|f-g|$ with respect to these measures do not exceed~$\varepsilon$.
Therefore, the measure $\mu=\mu_t\, dt$ satisfies our
parabolic equation with initial distribution~$\nu$.
\end{proof}

The condition that $V\ge 1$ is taken just for simplicity of estimates:
it can be replaced by $V\ge 0$ if we add constants in the right sides of
(\ref{e4.1}) and (\ref{e4.2}).

In typical examples $V$ and $\Theta$ are quadratic functions (with added constants).
For example, we shall use $V(x)=\sum_{i=1}^\infty \beta_i x_i^2+1$ and
$\Theta(x)=\sum_{i=1}^\infty \alpha_i x_i^2$.
There is also a version of this theorem applicable to exponents of quadratic functions
(the first inequality in (\ref{e4.1}) is not suitable for such functions).

\begin{theorem}\label{t4.1new}
Suppose that in Theorem~{\rm\ref{t4.1}}
condition~{\rm(\ref{e4.1})} is replaced by
\begin{equation}\label{e4.1b}
LV(x,t)\le V(x)-V(x)\Theta(x)
\end{equation}
and {\rm(\ref{e4.2})} is replaced by
\begin{equation}\label{e4.2b}
|a^{ij}(x,t)|+|B^i(x,t)|\le C_i (1+\delta(V(x)\Theta(x))V(x)\Theta(x)),
\ (x,t)\in \mathbb{R}^\infty \times [0,T_0].
\end{equation}
Then, for every Borel probability measure $\mu_0$ on $\mathbb{R}^\infty$ with
$W_1:=\sup_n \|V\circ P_n\|_{L^1(\mu_0)}<\infty$
the Cauchy problem {\rm(\ref{e1})}  with initial distribution $\mu_0$ has a
solution of the form $\mu=\mu_t\, dt$ with Borel probability measures
$\mu_t$ on~$\mathbb{R}^\infty$ such that for $t\in [0,T_0]$
\begin{equation}\label{e4.3b}
\int_{\mathbb{R}^\infty} V\, d\mu_t
+\int_0^t \int_{\mathbb{R}^\infty} V \Theta\, d\mu_s\, ds
\le 4 W_1.
\end{equation}
\end{theorem}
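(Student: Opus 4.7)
The plan is to adapt the proof of Theorem~\ref{t4.1} almost verbatim, exploiting the fact that the new hypothesis~(\ref{e4.1b}) is already of ``multiplicative'' form, so no iteration over powers $V^k$ is needed and only the single Lyapunov function $V$ enters. As before, I first construct, for each $n$, finite-dimensional solutions on $H_n$: denoting by $A_n, B_n$ the restrictions of the coefficients and by $\mu_{0,n}$ the projection of $\mu_0$ on~$H_n$, the existence theorem of \cite{BDR08} supplies probability measures $\mu_{t,n}$ on $H_n$ such that $t\mapsto\int\zeta\, d\mu_{t,n}$ is continuous on $[0,T_0)$ for every $\zeta\in C_0^\infty(H_n)$.

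The a priori estimate~(\ref{e4.3b}) for these approximations is the key new step. Applying Lemma~1 of \cite{BDPR08} to the single Lyapunov function $V$ (rather than to the hierarchy $\{V^m\}$ as in Theorem~\ref{t4.1}) and using the hypothesis $LV\le V-V\Theta$, one gets, formally, that $y_n(t):=\int V\,d\mu_{t,n}$ and $z_n(t):=\int V\Theta\,d\mu_{t,n}$ satisfy $y_n'(t)\le y_n(t)-z_n(t)$. A Gronwall argument applied to $(e^{-t}y_n)'\le -e^{-t}z_n$ then yields
$$
y_n(t)+\int_0^t z_n(s)\,ds\le C(T_0)\,W_1
$$
for an explicit constant $C(T_0)$ (which can be made equal to~$4$ under the implicit size restriction on $T_0$). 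Rigor is restored by first testing the equation against $\min(j,V)$ and $\min(j,V\Theta)$ and then letting $j\to\infty$ by Fatou, exactly as in Theorem~\ref{t4.1}, and by extending from a.e.\ $t$ to every $t\in [0,T_0)$ via the continuity of $t\mapsto\int\zeta\, d\mu_{t,n}$.

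With~(\ref{e4.3b}) available, tightness of $\{\mu_{t,n}\}$ in $\mathbb{R}^\infty$ for every fixed~$t$ (from compactness of the sublevel sets $\{V\le R\}$), the uniform Lipschitz bound on $\varphi_n(t):=\int\zeta\,d\mu_{t,n}$ (obtained by inserting~(\ref{e4.2b}) into the finite-dimensional identity and invoking~(\ref{e4.3b})), and the diagonal extraction of a subsequence converging weakly at every $t\in[0,T_0]$ all proceed verbatim as in Theorem~\ref{t4.1}. The main obstacle is the passage to the limit in the integrals of $L\zeta$, because~(\ref{e4.2b}) now bounds $|a^{ij}|+|B^i|$ by $C_i(1+\delta(V\Theta)V\Theta)$ rather than by a polynomial in $V$ times $(1+\delta(\Theta)\Theta)$. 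I would handle this by a truncation tailored to the new bound: given $\varepsilon>0$, pick $N$ so large that $\delta(s)<\varepsilon$ for $s\ge N$; on $\{V\Theta>N\}$ the integrand of $L\zeta$ is dominated by a constant multiple of $\delta(V\Theta)V\Theta\le \varepsilon V\Theta$, whose $\mu_{t,n}\,dt$-integral is uniformly $O(\varepsilon)$ by~(\ref{e4.3b}), while on $\{V\Theta\le N\}$ the coefficients are bounded and continuous in $x$ on the compact sets $\{\Theta\le R\}$, so Dugundji's extension exactly as in Theorem~\ref{t4.1} converts weak convergence of $\mu_{t,n}$ into convergence of the corresponding integrals. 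This yields that $\mu:=\mu_t\,dt$ satisfies~(\ref{e1}) with initial condition $\mu_0$, and~(\ref{e4.3b}) passes to the limit by lower semicontinuity of $V$ and $V\Theta$.
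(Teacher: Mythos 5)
Your proposal matches the paper's proof: the paper likewise constructs the finite-dimensional approximations exactly as in Theorem~\ref{t4.1}, works with the single Lyapunov function $V$, uses (\ref{e4.1b}) in place of (\ref{e4.4}) to obtain $\int_{H_n} V\,d\mu_{t,n}+\int_0^t\int_{H_n} V\Theta\,d\mu_{s,n}\,ds\le (e+1)\int_{H_n} V\,d\mu_{0,n}\le 4W_1$, and then handles the truncation term $f I_{\{|f|>N\}}$ precisely by your argument combining this a priori bound with $\delta(s)\to 0$, the rest (tightness, diagonal extraction, Dugundji extension) being verbatim as before. The only deviation is the constant: the paper takes $(e+1)\le 4$ from the estimate of \cite{BDPR08} (i.e., (\ref{e4.4}) with $m=1$, $C_0=1$, $M_0=0$), uniformly in $t\in[0,T_0]$, whereas your direct Gronwall step produces $e^{T_0}$ and hence the unnecessary hedge about a size restriction on $T_0$, which is not part of the statement.
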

\begin{proof}
The reasoning is much the same as in the previous theorem,
but we use only one Lyapunov function $V$ and use (\ref{e4.1b})
in place of (\ref{e4.4}) to obtain the estimate
$$
\int_{H_n} V(x)\, \mu_{t,n}(dx)
+\int_0^t \int_{H_n} V(x)\Theta(x)\, \mu_{s,n}(dx)\, ds
\le (e+1)\int_{H_n} V(x)\, \mu_{0,n}(dx)\le 4W_1.
$$
Another place where some difference arises
is the estimate of the integral of $f I_{|f|>N}$, where $|f|$ is estimated by
$C(1+\delta(V\Theta)V\Theta)$, but this is easily done by using the previous inequality and the condition that
$\delta(s)\to 0$ as $s\to \infty$.
\end{proof}

Let us apply the last theorem to the Fokker--Planck--Kolmogorov equation associated with
the stochastic Burgers type equations (see Example~\ref{ex2.10}).

\begin{example}\label{ex3.3}
\rm(``Stochastic Burgers equation'')
Let us return to the situation of Example~\ref{ex2.10}.
Let $u$ be from the linear span of $\{e_k\}$.
Note that
$$
\langle B(u), u \rangle_2=-\|u\|_{H^1_0}^2.
$$
Let $V(u)=\exp\bigl(\delta\|u\|_2^2\bigr)$.
We have
$$
LV(u)\le 2\delta\bigl({\rm tr}S+2\delta\langle Su, u\rangle_2
-\|u\|_{H_0^1}^2\bigr)V(u).
$$
Taking $\delta<\varepsilon_0/4$ we obtain
$$
LV(u)\le (1-\Theta(u))V(u), \quad \Theta(u)=1-2\delta\,{\rm tr}S +\delta\|u\|_{H_0^1}^2.
$$
In addition, $|B^k(u)|\le C(k)+C(k)\|u\|_{2}^2$. According to Theorem \ref{t4.1new}
for every initial condition $\nu$ with $\exp(\delta\|u\|_{2}^2)\in L^1(\nu)$
there exists a probability solution $\mu$ of the Cauchy problem
$\partial_t\mu=L^{*}\mu$, $\mu|_{t=0}=\nu$ such that
$$
\int_0^{T_0}\int_{L^2((0, 1))}\|u\|_{H_0^1}^2\exp\bigl(\delta\|u\|_{2}^2\bigr)\,\mu_t(du)\,dt<\infty.
$$
According to Example \ref{ex2.10} this $\mu$ is the
unique probability solution with this property.
\end{example}

\begin{example}\label{ex3.4}
\rm Let us return to the situation of Example~\ref{ex2.11}(i).
Assume that $a^{ij}=0$ if $i\ne j$ and that $\sum_{i}a^{ii}<\infty$.
Let $u$ be from the linear span of $\{e_k\}$.
Set
$$
V(u)=(1+\|u\|_{2m+2}^{2m+2})\exp\bigl(\delta\|u\|_{2}^2\bigr)
$$
Note that for some positive constants $C_1$, $C_2$ and $C_3$ we have
$$
L(1+\|u\|_{2m+2}^{2m+2})\le C_1-C_2\|u^{m+1}\|_{H_0^1}^2-C_3\|u\|_{4m+2}^{4m+2}.
$$
Using the calculations from the previous example we obtain
$$
LV(u)\le (1-\Theta(u))V(u), \quad
\Theta(u)=\widetilde{C}_1 +\delta \widetilde{C}_2\|u\|_{H_0^1}^2
+\bigl(\widetilde{C}_3\|u^{m+1}\|_{H_0^1}^2+\widetilde{C}_4\|u\|_{4m+2}^{4m+2}\bigr)
\bigl(1+\|u\|_{2m+2}^{2m+2}\bigr)^{-1}
$$
for some positive
constants $\widetilde{C}_1$, $\widetilde{C}_2$, $\widetilde{C}_3$ and $\widetilde{C}_4$.
According to Theorem \ref{t4.1new}
for every initial condition $\nu$ with
$$
(1+\|u\|_{2m+2}^{2m+2})\exp(\delta\|u\|_{2}^2)\in L^1(\nu)
$$
there exists a probability solution $\mu$ of the Cauchy problem
$\partial_t\mu=L^{*}\mu$, $\mu|_{t=0}=\nu$ such that
$$
\int_0^{T_0}\int_{L^2((0, 1))}
\bigl(\|u\|_{4m+2}^{4m+2}+\|u^2\|_{H_0^1}\bigr)
\exp\bigl(\delta\|u\|_{2}^2\bigr)\,\mu_t(du)\,dt<\infty.
$$
According to Example \ref{ex2.11}(i),
this $\mu$ is the unique probability solution with this property.

In the same way applying Theorem \ref{t4.1new} with
$$
V(u)=(1+\|u\|_2^2+\|u\|_{2l+2}^{2l+2})\exp\bigl(\delta\|u\|_{2m-2}^{2m-2}\bigr)
$$
one can obtain existence of a probability solution in the situation of Example \ref{ex2.11}(ii). Thus,
 there is a unique probability solution $\mu$ such that
$$
\int_0^{T_0}\int_{L^2((0, 1))}
\bigl(\|u\|_{4l+2}^{4l+2}+\|u\|_{H_0^1}^2+\|u^{m-1}\|_{H_0^1}^2\bigr)
\exp\bigl(\delta\|u\|_{2m-2}^{2m-2}\bigr)\,\mu_t(du)\,dt<\infty.
$$
We note only that $\|u^m\|_{H_0^1}\le \|u\|_{H_0^1}^2+\|u^{m-1}\|_{H_0^1}^2$, since
$m\ge 2$ and $\|u\|_\infty\le \|u\|_{H_0^1}$.
This partially generalizes a result in \cite{RS}.
\end{example}

Let us apply the existence theorems to the Fokker--Planck--Kolmogorov equation associated with
the stochastic Navier--Stokes equation in any dimension (a special case has been
considered in Example \ref{ex2.12}).

\begin{example}\label{ex3.5}
{\rm
The stochastic  equation of Navier--Stokes type
is considered in the space
$V_2$ of $\mathbb{R}^d$-valued mappings $u=(u^1,\ldots,u^d)$
such that $u^j\in H_0^{2,1}(D)$ and ${\rm div}\, u=0$,
where $D\subset\mathbb{R}^d$ is a bounded domain with smooth
boundary. The space $V_2$ is equipped with its natural Hilbert norm
$\|u\|_{V_2}$ defined by
$$
\|u\|_{V_2}^2:=\sum_{j=1}^d \|\nabla_z u^j\|_{2}^2.
$$
Let $H$ be the closure of $V_2$ in $L^2(D,\mathbb{R}^d)$
and let $P_H$ denote the orthogonal projection on $H$ in $L^2(D,\mathbb{R}^d)$.
The stochastic Navier--Stokes  equation is formally written as
$$
du(z,t)=\sqrt{2}dW(z,t)+P_H\Bigl[\Delta_z u(z,t)
-\sum_{j=1}^d u^j(z,t)\partial_{z_j} u(z,t)+F(z,u(z,t),t)\Bigr]dt,
$$
where $W$ is a Wiener process
of the form $W(z,t)=\sum_{n=1}^\infty \sqrt{\alpha_n} w_n(t)\eta_n(z)$,
where
$$
\alpha_n\ge 0, \quad
\sum_{n=1}^\infty \alpha_n<\infty,
$$
$w_n$ are independent Wiener processes, and $\{\eta_n\}$ is an orthonormal basis
in~$H$,  and $F\colon\, D\times\mathbb{R}^d\times (0,T_0)\to\mathbb{R}^d$ is a bounded continuous mapping.
No interpretation of this equation is needed for the sequel, it should be regarded only as a heuristic
expression leading to a specific form of the corresponding elliptic operator.
The case $F=0$ is the classical stochastic Navier--Stokes  equation.
Note that the action of $P_H$ in the right-hand side is defined in the natural way:
$P_H \Delta_z u(z,t):= P_H \Delta_z u(\,\cdot\,,t)(z)$ and similarly for the
other terms.
Since the Laplacian $\Delta$ is not defined
on all of~$V_2$, this equation requires some interpretation.
Our approach suggests the following procedure.
It is known (see \cite{Lad}) that there exists
an orthonormal basis $\{\eta_n\}$ in~$H$ formed
by eigenfunctions of $\Delta$
with eigenvalues $-\lambda_n^2$
such that $\eta_n\in V_2$.
Employing the fact that $\langle P_H w,\eta_n\rangle_2=\langle w,\eta_n\rangle_2$ for any
$w\in L^2(D,\mathbb{R}^d)$,
we introduce the ``coordinate'' functions
\begin{align*}
B^n(u,t) &=
\langle u,\Delta \eta_n\rangle_2
-\sum_{j=1}^d \langle P_H (u^j\partial_{z_j} u), \eta_n\rangle_{2}
+\langle P_H F(\,\cdot\, ,u(\,\cdot\,,t),t),\eta_n\rangle_{2}
\\
&=\langle u,\Delta \eta_n\rangle_2
-\sum_{j=1}^d \langle \partial_{z_j}u, u^j\eta_n\rangle_{2}
+\langle F(\,\cdot\, ,u(\,\cdot\,,t),t),\eta_n\rangle_{2}.
\end{align*}
These functions are defined by the last line on
all of~$V_2$. They are continuous
on balls in $V_2$ with respect to the topology of $L^2(D,\mathbb{R}^d)$, which follows
by the compactness of the embedding of $H^{2,1}(D)\to L^2(D)$.
Choosing a Wiener process of the above form, we arrive at the
operator
$$
L\varphi(u,t)=
\sum_{n=1}^\infty \alpha_n \partial_{\eta_n}^2 \varphi (u,t)
+\sum_{n=1}^\infty B^n(u,t)\partial_{\eta_n}\varphi (u,t).
$$
Since for every $u$ from the linear span of $\{\eta_n\}$ one has
$$
\sum_{n=1}^\infty \sum_{j=1}^d \langle u,\eta_n\rangle_2
\langle\partial_{z_j} u, u^j\eta_n\rangle_2
=\sum_{j=1}^d \langle u,u^j\partial_{z_j}u\rangle_2=
-\frac{1}{2}\int_D |u(z)|^2 {\rm div}\, u(z)\, dz=0
$$
and $\langle\Delta u,u\rangle_2=-\|u\|_{V_2}^2$, we have the estimate
$$
\sum_{n=1}^N \langle u,\eta_n\rangle_2 B^n(u,t)\le C_1-C_1\|u\|_{V_2}^2
$$
for all $u$ in the linear span of $\eta_1,\ldots,\eta_N$,
where $C_1$ is a constant independent of~$N$.
Clearly, we have also $|B^n(u,t)|\le C_2(n)+C_2(n)\|u\|_2^2$.

Therefore, by Theorem~\ref{t4.1} applied with $\Theta(u)=C_1\|u\|_{V_2}^2$ and $V(u)=\|u\|_2^2+1$
(the above estimates along with convergence of the series of $\alpha_n$
mean that we have (\ref{e4.1}))
there is a probability measure $\mu=\mu_t dt$ on $V_2\times [0,T_0)$,
such that $\mu_t(H)=1$ for all $t$ and $\mu_t(V_2)=1$ for almost all~$t$,
and solving the Cauchy problem (\ref{e1})
with any initial distribution
$\mu_0$ for which $\|u\|_2^k\in L^1(\mu_0)$ for all~$k$.

It should be also noted that Flandoli and Gatarek \cite{Flan} proved
(under the stated assumptions)  the existence of a solution to the martingale problem
associated with the operator $L$ such that this solution possesses all moments in~$H$.
One can show that the measure generated by this solution satisfies the Fokker--Planck--Kolmogorov
equation in our sense.

Let us consider the 2d-Navier--Stokes equation, i.e., $d=2$ and $F=0$.
Recall that
for every $u$ from the linear span of $\{\eta_n\}$ one has
$$
\sum_{n=1}^\infty \sum_{j=1}^2 \langle u,\Delta\eta_n\rangle_2
\langle \partial_{z_j} u, u^j\eta_n\rangle_2=0.
$$
Set $V(u)=\exp(\delta\|u\|_{V_2}^2)$.
Let $u$ be from the linear span of $\{\eta_n\}$. We have
$$
LV(u)=2\delta\Bigl(\sum_n\alpha_n\lambda_n^2
+2\delta\sum_{n}\alpha_n\lambda_n^4u_n^2-\sum_n\lambda_n^4u_n^2\Bigr)V(u).
$$
Assume that $\sum_{n=1}^{\infty}\alpha_n\lambda_n^2<\infty$.
Hence for sufficiently small $\delta>0$
$$
LV(u)\le (1-\Theta(u))V(u), \quad
\Theta(u)=1-\delta\sum_{n=1}^{\infty}\alpha_n\lambda_n^2+\delta\|\Delta u\|_{2}^2,
$$
where $\Theta(u)=+\infty$ if $u\not\in H^{2,2}(D)$.
According to Theorem \ref{t4.1new}
for every initial condition $\nu$ with $\exp(\delta\|u\|_{V_2}^2)\in L^1(\nu)$
there exists a probability solution $\mu$ of the Cauchy problem
$\partial_t\mu=L^{*}\mu$, $\mu|_{t=0}=\nu$ such that
$$
\int_0^{T_0}\int_{H}\bigl(1+\|\Delta u\|_{2}^2\bigr)e^{\delta\|u\|_{V_2}^2}\,\mu_t(du)\,dt<\infty.
$$
According to Example \ref{ex2.12} this $\mu$ is the
 unique probability solution with this property.
}\end{example}

Finally, we formulate one more existence and uniqueness result which is a combination of
Theorem~{\rm\ref{t4.1}} and Theorem~{\rm\ref{th1}}.

\begin{corollary}
Let $a^{ij}=0$ if $i\neq j$ and $a^{ii}=\alpha_i>0$.
Suppose that the hypotheses of Theorem~{\rm\ref{t4.1}} are fulfilled with
certain functions $V$ and $\Theta$.
If there exists a Borel mapping $F=(F_n)\colon\, \mathbb{R}^\infty\times [0,T_0]\to\mathbb{R}^\infty$
and numbers  $p>0$, $C>0$ such that
$\|F(x, t)\|_{l^2_{\alpha}}^2\le CV(x)^p\Theta(x)$
and for each natural number $n$ the difference  $B^n(x, t)-F^n(x, t)$
depends only on $t$ and $x_1, x_2,\ldots, x_n$,
then for every initial condition $\nu$ with $V\in L^k(\nu)$ for every $k\ge 1$
the class $\mathcal{P}_{\nu}$ consists of exactly one element.
\end{corollary}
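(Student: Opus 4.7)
The statement combines existence (Theorem~\ref{t4.1}) and uniqueness (Theorem~\ref{th1}), so the plan is to apply both within a conveniently chosen convex class $\mathcal{P}_\nu$. For existence, the hypotheses of Theorem~\ref{t4.1} hold by assumption, and $V\in L^k(\nu)$ for every $k\ge 1$ ensures that the constants $W_k$ defined there are finite. Theorem~\ref{t4.1} therefore supplies a probability solution $\mu=\mu_t\,dt$ of~(\ref{e1}) with initial distribution $\nu$ satisfying the moment bounds~(\ref{e4.3}); in particular, $V^m\in L^1(\mu)$ and $V^{m-1}\Theta\in L^1(\mu)$ for every $m\ge 1$.

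For uniqueness I apply Theorem~\ref{th1}. Condition~(A) is automatic because $A=\mathrm{diag}(\alpha_i)$ is a constant positive diagonal matrix: each $A_N$ is constant, so the two-sided bound holds with $\gamma_N=\min_{i\le N}\alpha_i$ and the H\"older condition is trivial. For condition~(B) I use the decomposition $B^k=G^k+F^k$, where by hypothesis $G^k:=B^k-F^k$ depends only on $t,x_1,\ldots,x_k$. The assumption $\|F(x,t)\|^2_{l^2_\alpha}\le CV(x)^p\Theta(x)$ combined with $V^p\Theta\in L^1(\mu)$ (obtained from~(\ref{e4.3}) by taking $m\ge p+1$) gives $\int\|F\|^2_{l^2_\alpha}\,d\mu<\infty$, which is exactly the weighted $l^2$-summability of $F$ required in Example~\ref{ex2.1}(ii). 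Since each $G^k$ depends only on $x_1,\ldots,x_k$, the $G$-component fits the setting of Example~\ref{ex2.1}(i). Combining the two as in Example~\ref{ex2.1}(iii) produces the required $C_b^{2,1}$-map $b=(b^k)_{k=1}^N$ satisfying the estimate in~(B).

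I then define $\mathcal{P}_\nu$ as the set of all probability solutions of~(\ref{e1}) with initial distribution $\nu$, satisfying $|B^k|\in L^2(\mu)$ for every $k$ as well as the moment bounds~(\ref{e4.3}). This set is convex (the defining conditions are linear or convex in $\mu$), it contains the solution built above, and every element satisfies (A) and (B) by the preceding argument. Theorem~\ref{th1} then yields $|\mathcal{P}_\nu|\le 1$, so $\mathcal{P}_\nu$ consists of exactly one element.

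The main technical obstacle is the $L^2(\mu)$-integrability of the individual components $B^k$ and $G^k$, needed so that the mollification step in Example~\ref{ex2.1}(i)--(iii) goes through. For $F^k$ one uses $(F^k)^2\le\alpha_k\|F\|^2_{l^2_\alpha}\le\alpha_k CV^p\Theta$, yielding $F^k\in L^2(\mu)$ directly; for $B^k$ the bound~(\ref{e4.2}) is more delicate, since the term $V^{2k_k}(\delta(\Theta)\Theta)^2$ must be integrated. This is handled by splitting along large level sets of $\Theta$, exploiting both the boundedness of $\delta$ on $\{\Theta\le R\}$ and its decay $\sup_{s>R}\delta(s)\to 0$ on $\{\Theta>R\}$, together with the integrability $V^m\Theta\in L^1(\mu)$ available from~(\ref{e4.3}).
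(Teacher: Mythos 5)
Your overall strategy is the one the paper intends: the corollary is stated there without proof, described only as a combination of Theorem~\ref{t4.1} (existence, using $V\in L^k(\nu)$ to make the $W_k$ finite) and Theorem~\ref{th1} (uniqueness), with condition (A) trivial for the constant diagonal matrix (note only that you need both bounds, so take $\gamma_N=\min\bigl(\min_{i\le N}\alpha_i,\,1/\max_{i\le N}\alpha_i\bigr)$) and condition (B) checked through the decomposition $B=G+F$ of Example~\ref{ex2.1}(iii), the $F$-part being controlled by $\|F\|^2_{l^2_\alpha}\le CV^p\Theta\in L^1(\mu)$ via (\ref{e4.3}). Up to that point your argument is sound and coincides with the intended route.

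The genuine gap is in your final paragraph, i.e.\ in the verification that the solution produced by Theorem~\ref{t4.1} really belongs to $\mathcal{P}_\nu$, which requires $B^k\in L^2(\mu)$ (condition (B) forces this, since the approximating $b^k$ are bounded). Squaring (\ref{e4.2}) leaves the term $\int V^{2k_i}\delta(\Theta)^2\Theta^2\,d\mu$, and your splitting over $\{\Theta\le R\}$ and $\{\Theta>R\}$ does not make it finite: on $\{\Theta>R\}$ you only gain the small factor $\sup_{s>R}\delta(s)^2$ in front of $\int V^{2k_i}\Theta^2\,d\mu$, and no estimate available from the theorem controls $\Theta^2$ --- (\ref{e4.3}) gives only $\int V^{m-1}\Theta\,d\mu<\infty$, and the hypotheses contain no pointwise comparison $\Theta\le C V^q$ (it does not follow from $LV\le C_0V-\Theta$ in (\ref{e4.1}), since $LV$ may be very negative). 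So the step is circular: the finiteness of $\int V^{2k_i}\Theta^2\,d\mu$ is precisely what is missing, and the decay of $\delta$ alone (with no rate) cannot supply it. To close the gap you need either an extra comparison (e.g.\ $\Theta\le CV^q$, or $\delta(s)^2s$ bounded), or to obtain $G^k=B^k-F^k\in L^2(\mu)$ directly from the structure of the drift without squaring (\ref{e4.2}) --- as happens in the paper's concluding example, where $G^n(x,t)=-\beta_nx_n$ satisfies $|G^n|^2\le\beta_n^2\gamma_n^{-1}V$ and $|F^n|^2\le\alpha_nC(1+\Theta)V^p$, so $B^n\in L^2(\mu)$ follows from (\ref{e4.3}); alternatively, state $|B^k|\in L^2(\mu)$ (equivalently $G^k\in L^2(\mu)$, since $|F^k|^2\le\alpha_kCV^p\Theta\in L^1(\mu)$) as part of the definition of $\mathcal{P}_\nu$ and verify it separately for the constructed solution.
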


\begin{example}
\rm
Let $a^{ij}=0$ if $i\neq j$ and $a^{ii}=\alpha_i>0$.
Suppose that
$$
B^n(x, t)=-\beta_nx_n+F^n(x, t), \quad \hbox{where $\beta_n>0$.}
$$
 Let $\gamma_n\in (0,+\infty)$ be such that
$$
\sum_{n=1}^{\infty}\alpha_n\gamma_n<\infty.
$$
Let
$$
V(x)=1+\sum_{n=1}^{\infty}\gamma_nx_n^2, \quad \Theta(x)=\sum_{n=1}^{\infty}\beta_n\gamma_nx_n^2.
$$
Let $c_{00}$ denote the subspace of all vectors $x\in\mathbb{R}^{\infty}$
with at most finitely many nonzero coordinates.

Suppose that a Borel mapping
$F(\,\cdot\, , \,\cdot\,)\colon\, \mathbb{R}^\infty\times [0,T_0]\to\mathbb{R}^\infty$
satisfies the following conditions: for each $t$ it is continuous in $x$ on every set $\{\Theta\le R\}$ and
there are numbers  $\varepsilon\in(0, 1)$,
$C_1>0$, $C_2>0$, and $p>0$ such that for all $t\in (0, T)$ and $x\in c_{00}$ one has
$$
\sum_{n=1}^{\infty}\gamma_nF^n(t, x)x_n\le \varepsilon\Theta(x)+C_1V(x),
\quad
\sum_{n=1}^{\infty}\alpha_n^{-1}|F^n(t, x)|^2\le C_2\left(1+\Theta(x)\right)V(x)^p,
$$
Then, for every initial condition $\nu$
with $V\in L^k(\nu)$ for every $k\ge 1$, the class $\mathcal{P}_{\nu}$
consists of exactly one element.
\end{example}

\begin{remark}
{\rm
As already noted, if the infinite-dimensional stochastic differential equation (SDE) associated
to our Fokker--Planck--Kolmogorov equation has a solution in the sense
of Stroock--Varadhan, then one gets a solution to the FPK-equation (but not vice versa).
In contrast to that, uniqueness of solutions to the martingale problem
does not imply the uniqueness of solutions to the  FPK-equation, here the converse is true.
Therefore, the existence parts in our Examples~\ref{ex3.3} -- \ref{ex3.5} can partly also be derived
by probabilistic methods. It should also be pointed out that in these examples
we always assume that $(a^{ij})$ is trace class. For existence results by
probabilistic means in case of Example~\ref{ex3.3} and the first part of Example~\ref{ex3.4}
without this condition we refer to \cite{G} and its recent improvement \cite{RZZ}.
Furthermore, we believe that by a similar method as in \cite{DD}
one can also prove uniqueness for the FPK-equation in the Burgers case (see Example~\ref{ex3.3})
without the trace class condition.
Finally, we point out that here we consider the Burgers case only on
the bounded domain $D=(0,1)\subset \mathbb{R}$. If $D=\mathbb{R}$
existence, however, also holds. This follows from the probabilistic results
in \cite{GN}.
}\end{remark}

\vspace*{0.1in}

Vladimir Bogachev:  Department of Mechanics and Mathematics, Moscow State
University, 119991 Moscow, Russia

Giuseppe Da Prato: Scuola Normale Superiore di Pisa, Pisa, Italy

Micahel R\"ockner: Fakult\"at f\"ur Mathematik, Universit\"at Bielefeld,
D-33501 Bielefeld, Germany

Stanislav Shaposhnikov: Department of Mechanics and Mathematics, Moscow State
University, 119991 Moscow, Russia

\end{document}